\newtheorem{Thm}{Theorem}[section]
\newtheorem{Lem}[Thm]{Lemma}
\newtheorem{Alg}[Thm]{Algorithm}
\newtheorem{Rem}{Remark}
\begin{document}
		\begin{opening}
			\title{On Trigonometric Interpolation and Its Applications}
			\author{Xiaorong \surname{Zou}\email{xiaorong.zou@bofa.com}}
			\institute{Global Market Risk Analytic, Bank of America}
			\runningauthor{X. Zou}
			\runningtitle{TIBO for Non-Linear ODE System}
			\date{May 4, 2025}
			\classification{MSC2000}{Primary 65T40; Secondary 65L05}
			\keywords {Fourier Series,	Trigonometric Interpolation, Fast Fourier Transformation (FFT),  Ordinary Differential Equation,  Runge-Kutta method.}
\begin{abstract}
In this paper, we propose a new trigonometric interpolation algorithm and establish relevant convergent properties.  The method adjusts an existing trigonometric interpolation algorithm such that it can better leverage Fast Fourier Transform (FFT)  to enhance efficiency. The algorithm can be formulated in a way such that certain cancellation effects can be effectively leveraged for error analysis, which enables us not only to obtain the desired uniform convergent rate of the approximation to a function, but desired uniform convergent rates for its derivatives as well. 

We further enhance the algorithm so it can be applied to non-periodic functions defined on bounded intervals. Numerical testing results confirm decent accurate performance of the algorithm.  For its application, we demonstrate how it can be applied to estimate integrals and solve linear/non-linear ordinary differential equation (ODE). The test results show that it significantly outperforms Trapezoid/Simpson method on integral and standard Runge-Kutta algorithm on ODE. In addition, we show some numerical evidences that estimation error of the algorithm likely exhibits ``local property", i.e. error at a point tends not to propagate, which avoids significant compounding error at some other place, as a remarkable advantage compared to polynomial-based approximations.
\end{abstract}
\end{opening}

\section{Introduction}\label{sec:introduction}
Let $f(x)$ be a  periodic function with period $2b$. Assume that its $K+1$-th derivative $f^{(K+1)}$ is bounded by $D_{K+1}$ for certain $K\ge 1$.  By classic Fourier analysis, it can be represented by the Fourier series:
\begin{equation}\label{fx}
	f(x)  = \frac{A_0}2 + {\sum_{j \ge 1}} A_j \cos\frac{j\pi x}b + B_j \sin\frac{j\pi x}b,
\end{equation}
where
\begin{eqnarray*}\label{Aj1}
	A_j&=&\frac{1}{b}\int^{b}_{-b} f(x)\cos\frac{j\pi x}bdx \quad j\ge 0 \label{coef_cos_A}, \\
	B_j&=&\frac{1}{b}\int^{b}_{-b} f(x)\sin\frac{j\pi x}bdx \quad j>0.  \nonumber
\end{eqnarray*}
It is well-known that $A_j, B_j$ converge to $0$ with the order $O(\frac{1}{j^{K+1}})$. One can estimate $f(x)$ by the sum of first $2n+1$ terms
\[
	f(x)  \approx \tilde {f}_n(x) := \frac{A_0}2 + {\sum_{1\le j \le n}} A_j \cos\frac{j\pi x}b + B_j \sin\frac{j\pi x}b.
\]
A function like $\tilde {f}_n(x)$ is called a trigonometric polynomial of degree $n$ in literature \cite{Anthony}. 
In practice, the coefficients $A_j, B_j$ need to be estimated by some numerical algorithm. Let 
\[
	x_j =\frac{2bj }{2n+1} = j\lambda, \qquad  \lambda:=\frac{2b}{2n+1}, \qquad  -n\le j \le n.  
\]

be $2n+1$ equispaced nodes over $[-b,b]$, and $y_j =f(x_j)$. One can estimate  
\begin{eqnarray}
	A_j &\approx& {a}^{(2n+1)}_{j}:= \frac{2}{2n+1}\sum_{k=-n}^{n}y_k\cos\frac{2\pi kj}{2n+1}, \label{formula_tra_a}\\
	B_j &\approx& {b}^{(2n+1)}_{j}: = \frac{2}{2n+1}\sum_{k=-n}^{n}y_k\sin\frac{2\pi kj}{2n+1}. \label{formula_tra_b}
\end{eqnarray}
It is not hard to verify that
\[	
	f^{(2n+1)}_n := \frac{{a}^{(2n+1)}_{0}}2 + {\sum_{1\le j \le n}} {a}^{(2n+1)}_{j} \cos\frac{j\pi x}b + {b}^{(2n+1)}_{j} \sin\frac{j\pi x}b.
\]
is the unique trigonometric interpolant of degree $n$ that solves 
\[
f^{(2n+1)}_n(x_j) =f(x_j).
\]
In general, a trigonometric interpolant of degree $n$ with given nodes  $t_0<t_1\dots, <t_{N-1}$ is a trigonometric polynomial  
\[
f_n(x)=a_{0,n} + {\sum_{1\le j \le n}}  a_{j,n} \cos\frac{j\pi x}b + b_{j,n} \sin\frac{j\pi x}b,
\]
such that 
\[
y_k:=f(t_k)=f_n(t_k), \quad k=0,\cdots, N-1.
\]
The number $N$ of the nodes is often required to be same as number of the coefficients of $f_n(x)$ to ensure the uniqueness, which is often desired. For trigonometric interpolation,  equispaced nodes are recommended \cite{WJMT}: 
\[
	t_j = \alpha + \frac{2bj }{N},  \qquad  0\le j < N,
\]
where $\alpha$ is a fixed constant.  Performance of an interpolation is quite sensitive to the selection of $\alpha$.  For example,  it is shown in \cite{Gosselin} that there is a continuous periodic function $f(x)$ for which the unique interpolating sequence with $\alpha=0$ diverges for all $x\neq 0  mod(2\pi)$, but the sequence converges uniformly if $\alpha$ is irrational with respect to $\pi$.

$\{{a}^{(2n+1)}_{j}, {b}^{(2n+1)}_{j}\}$ by Eq. (\ref{formula_tra_a}-\ref{formula_tra_b}) can be easily implemented by Fast Fourier Transform (FFT). Note that odd number $N=2n+1$ of terms occurs in FFT while the optimal performance of FFT can only be reached when $N=2^h$ is a radix-$2$ integer for some $h$ \cite{ct}.  A trigonometric interpolation by even nodes is available by Theorem 3.5-3.6 \cite{ryts} combined as follows :
\begin{Thm}\label{ryts_th_3_6} Let $f(x)$ be a periodic function with period $2b$ and $N=2M$ be an even integer and define	
\[
	x_j = -b + j\lambda,  \quad \lambda= \frac{2b}N,  \quad  y_j = f(x_j), \qquad 0\le j <N.
\]
	Then there is a unique  trigonometric polynomial defined by
	\begin{eqnarray}
		\tilde{Q}_M(x) &=& \sum_{0\le j \le M}a^e_j \cos\frac{j\pi x}b + \sum_{1\le j <M}a^o_j \sin\frac{j\pi x}b, \label{ryts_f_M_interpolation_new}\\
		a^e_0&=& \frac1N\sum_{0\le j <N}y_{j}, \label{ryts_a0}\\
		a^e_j&=&\frac{2}N\sum_{0\le k <N}(-1)^j y_k \cos\frac{2\pi j k}{N},  \quad 1\le j <M,  \label{ryts_aj_even}\\
		a^e_M &=& \frac1N\sum_{0\le j <N}(-1)^jy_{j}, \label{ryts_aM}\\
		a^o_j&=&\frac{2}N\sum_{0\le k <N} (-1)^j y_k \sin\frac{2\pi j k}{N}, \quad 1\le j <M, \label{ryts_b}
	\end{eqnarray}
	such that 
	\[
	\tilde{Q}_M(x_{k})  =y_{k}, \qquad 0\le k < N.
	\]
	Furthermore, the error is bounded by
	\begin{equation}\label{ryts_error}
		|R_N(x)|:=|f(x)-\tilde{Q}_n(x)|\le \frac{\xi_M}{M^{K-\frac12}}, \qquad \xi_M=o(1). 
	\end{equation}
\end{Thm}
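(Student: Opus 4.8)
The plan is to prove the three assertions of Theorem~\ref{ryts_th_3_6} in turn: existence and uniqueness of $\tilde{Q}_M$, the closed forms (\ref{ryts_a0})--(\ref{ryts_b}) for its coefficients, and the error estimate (\ref{ryts_error}). For the first two I would normalize by setting $\theta=\pi x/b$, so that $f$ becomes $2\pi$-periodic and the nodes become the equispaced grid $\theta_k=-\pi+2\pi k/N$, $0\le k<N$. Every computation then reduces to the single exponential-sum identity $\sum_{k=0}^{N-1}e^{\mathrm{i}m\theta_k}=(-1)^mN$ when $N\mid m$ and $0$ otherwise; since $N=2M$ is even the nonzero value is simply $N$. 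The structural subtlety to flag at the outset is that $\sin(M\theta_k)=\sin(\pi(k-M))=0$ at every node while $\cos(M\theta_k)=(-1)^{k-M}\ne 0$. Thus the top sine mode is invisible on the grid and must be discarded to keep the linear system square, whereas the top cosine mode is retained; this is exactly the asymmetric truncation in (\ref{ryts_f_M_interpolation_new}), and it leaves $M+1$ cosines and $M-1$ sines, i.e. $N=2M$ basis functions for $N$ nodes.

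Next I would establish discrete orthogonality of these $N$ functions under the form $\langle u,v\rangle=\sum_{k=0}^{N-1}u(x_k)v(x_k)$. Expanding products into sums of cosines and applying the exponential-sum identity gives that distinct functions are orthogonal, that $\langle 1,1\rangle=\langle\cos(M\,\cdot),\cos(M\,\cdot)\rangle=N$, and that all intermediate modes have squared norm $N/2$. Being $N$ pairwise-orthogonal nonzero vectors in the space $\mathbf{R}^{N}$ of grid values, they are linearly independent, hence a basis, so the interpolation map is invertible and $\tilde{Q}_M$ exists and is unique. The coefficients are then the orthogonal projections $a^e_j=\langle y,\cos(j\,\cdot)\rangle/\langle\cos(j\,\cdot),\cos(j\,\cdot)\rangle$ and $a^o_j=\langle y,\sin(j\,\cdot)\rangle/\langle\sin(j\,\cdot),\sin(j\,\cdot)\rangle$. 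Substituting the phase relations $\cos(j\theta_k)=(-1)^j\cos(2\pi jk/N)$ and $\sin(j\theta_k)=(-1)^j\sin(2\pi jk/N)$ coming from the shift by $-\pi$ reproduces (\ref{ryts_a0})--(\ref{ryts_b}); the larger norm $N$ at $j=0$ and $j=M$ is what produces the $1/N$ rather than $2/N$ prefactors there.

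For the error (\ref{ryts_error}) I would split $R_N=(f-S_M)+(S_M-\tilde{Q}_M)$, where $S_M$ is the degree-$M$ Fourier partial sum. Integrating the defining integrals for $A_j,B_j$ by parts $K+1$ times and using $|f^{(K+1)}|\le D_{K+1}$ together with periodicity gives $A_j,B_j=O(j^{-(K+1)})$, which Riemann--Lebesgue sharpens to $o(j^{-(K+1)})$; hence $\|f-S_M\|_\infty\le\sum_{j>M}(|A_j|+|B_j|)=o(M^{-K})$. For the aliasing term, the projection computation above shows that the discrete cosine coefficient of mode $j$ equals $A_j+\sum_{l\ge 1}(A_{j+lN}+A_{lN-j})$, and similarly for the sines, so the coefficient-error vector $\delta$ is built entirely from genuine Fourier coefficients at the aliased frequencies $\{lN\pm j:l\ge 1\}$, every one of which is at least $N-M=M$. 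Since each frequency $m>M$ occurs with bounded multiplicity, $\|\delta\|_{\ell^1}\le C\sum_{m>M}(|A_m|+|B_m|)=o(M^{-K})$.

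Finally I would pass from coefficients to the sup norm by the elementary estimate $\|S_M-\tilde{Q}_M\|_\infty\le\|\delta\|_{\ell^1}=o(M^{-K})$. Combined with the truncation bound this gives $\|R_N\|_\infty=o(M^{-K})$, which already implies (\ref{ryts_error}) since $M^{-K}\le M^{-(K-1/2)}$; one may take $\xi_M=M^{K-1/2}\|R_N\|_\infty=o(M^{-1/2})=o(1)$. The slightly weaker exponent $K-1/2$ in the stated bound presumably reflects a cruder intermediate step (for instance a Cauchy--Schwarz passage $\|\delta\|_{\ell^1}\le\sqrt{2M+1}\,\|\delta\|_{\ell^2}$ through an $\ell^2$ coefficient norm) in the original derivation of \cite{ryts}. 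I expect the main obstacle to be the aliasing bookkeeping: correctly identifying which true frequencies fold onto each retained mode, tracking the $(-1)^m$ signs (in particular the sign attached to the top coefficient $a^e_M$), and verifying the uniform lower bound $m\ge M$ on the aliased frequencies so that the decay estimate applies across all retained modes simultaneously. The orthogonality and projection computations are routine once the $-\pi$ phase shift and the even-$N$ endpoint conventions are handled with care.
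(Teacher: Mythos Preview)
Your proposal is correct and follows the same decomposition that the paper attributes to \cite{ryts}: writing $R_N=(f-S_M)+(S_M-\tilde Q_M)$, where $S_M$ is the degree-$M$ Fourier partial sum, is exactly the splitting the paper sketches in Section~\ref{subsec:proof_conv_k_new} (there written as $R_M=\delta S-(\delta S)_M$ with $\delta S=f-S$ and $(\delta S)_M$ its interpolant, which equals $\tilde Q_M-S_M$ by linearity and the fact that $S_M$ interpolates itself). The paper does not prove Theorem~\ref{ryts_th_3_6} in full; it is quoted from \cite{ryts}, and only the outline of the error argument is recalled.

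Where you diverge from the sketched \cite{ryts} argument is in the second term. The paper reports that \cite{ryts} bounds $|(\delta S)_M(x)|$ directly and obtains only the rate $M^{-K+1/2}$, which is what drives the exponent in (\ref{ryts_error}). You instead identify $(\delta S)_M$ with the trigonometric polynomial whose coefficients are the aliasing errors $\sum_{l\ge 1}(A_{lN\pm j}+\cdots)$, observe that each true frequency $m>M$ appears with bounded multiplicity across all modes, and bound the sup norm by the $\ell^1$ norm of the coefficient errors to get $o(M^{-K})$. This is sharper and, as you note, already implies (\ref{ryts_error}); your guess that the lost $\sqrt{M}$ in \cite{ryts} comes from a Cauchy--Schwarz passage through $\ell^2$ is consistent with what the paper says. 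In effect you have independently arrived at the improvement in convergence rate that the paper advertises as one of its contributions (Theorem~\ref{conv_k_new}), though by the aliasing route rather than through the doubling relation of Lemma~\ref{lemma-key-pattern} that the paper develops.
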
 
Hence, the error $R_N(x)$ converges to zero uniformly with respect to x as $N\to \infty$ if $K\ge 1$, and the rate of convergence of $\tilde{Q}_n(x)$ automatically becomes faster for smoother $f$, remarkable advantage compared to polynomial interpolation when the convergence rate is limited by the degree of the polynomial \cite{ryts}.

Theorem  \ref{ryts_th_3_6} is only suitable for the reconstruction of periodic functions. For a function defined over bounded interval, which can always be scaled to $[-1,1]$,  one can first equivalently replace $f(x)$ by some smooth periodic function. A well-known method to transform $f(x)$ to a periodic function $F(\phi)$ is as follows:
\begin{equation}\label{cos_transform}
	F(\phi) = f(\cos \phi),\quad x=\cos\phi, \qquad \phi \in [0, \pi].
\end{equation}
It is easy to see that $F(\phi)$ can be interpreted as an even $2\pi$-periodic function of $\phi$ with same smoothness as $f(x)$. Theorem  \ref{ryts_th_3_6} can be applied to $F(\phi)$ with 
\[
\phi_j = -\pi + j\lambda,  \quad \lambda= \frac{2\pi}N,  \quad  y_j = F(\phi_j), \qquad 0\le j <N.
\]
Since  $F(\phi)$ is even,  Eq. (\ref{ryts_f_M_interpolation_new}) is reduced to
\[
\tilde{Q}_n(\phi) = \sum_{0\le j \le M}a^e_j \cos (j \phi),
\]
where $a_j$ is defined by Eq. (\ref{ryts_a0})-(\ref{ryts_aM}).  By introducing Chebyshev polynomials, one obtains a polynomial interpolation 
\[
	P_n(x) = \sum_{0\le j \le M}a^e_j T_j(x),
\]
where
\[
T_j(x) = \cos (j\phi) =\cos (j \arccos x), \qquad k=0,1,2, \dots, 
\]
are $j-th$ polynomial that were first introduced by Chebyshev: 
\[
T_0(x)=1, \quad T_1(x) = x, \quad T_2(x) = 2x^2-1, \quad T_3(x) = 4x^3-3x,\quad, \dots.
\] 
The error estimation (\ref{ryts_error}) can be directly applied to the Chebyshev polynomial interpolation for same convergence rate as in trigonometric interpolation.  In fact, the error rate can be slightly improved  as shown in \cite{ryts}.

Trigonometric interpolation is believed to be suitable for periodic function while Chebyshev polynomial interpolation tends to be preferred for non-periodic function in the literature. Some comparison between the two interpolation can be found in \cite{WJMT}. We highlight some pros and cons of trigonometric interpolation (T) and Chebyshev polynomial interpolation (C).
\begin{enumerate}
	\item  T can be easily implemented using FFT while C need copy with Chebyshev polynomials without close-form expression.  
	\item Attractive analytic representation greatly benefits T in its applications, especially where the analytic close form of derivatives and integrals of target function is desired.  On the contrary, implicit representation of Chebyshev polynomial restricts its applications in large degree.
	\item \label{limitation_item3} C can be applied to general functions, but T is limited to periodic functions.  
	\item The performance of both C and T depends on the smoothness of target function on the whole domain.  Lack of the smoothness at a single point would deteriorate overall performance dramatically. 
\end{enumerate}
Trigonometric interpolation with even number of points are discussed through imposing certain constrain to ensure uniqueness in \cite{Anthony}. The interpolant is constructed by trigonometric Lagrange basis function and therefore is more flexible with non-equspaced points. Recent researches on trigonometric interpolation are more based on the pioneer work \cite{berr11}. For a complex-valued function on some interval $I$,  the interpolants are constructed in a barycentric form for a give set of interpolation nodes.  A trigonometric interpolation algorithm is introduced in \cite{badd13} based on constructing a barycentric rational approximant selecting the nodes progressively via a greedy algorithm. 
In \cite{balt18}, a set of interpolating points is introduced to construct linear rational trigonometric interpolant written in barycentric form with exponential convergence rate and has been used effectively in [19] to interpolate functions on two-dimensional starlike domains. 

In addition, some recent researches have been done on Hermite interpolation.  Note that Hermite interpolation is an interpolates not only a function on a given set of nodes, but the values of its first $m$ derivatives with certain integer $m\ge 0$. \cite{elef} provides an algorithm to construct a barycentric trigonometric Hermite interpolant via an iterative approach.  More references in that direction can be found in \cite{scwe1}-\cite{dedinosi4}.

Despite the advantage on convergence rate and flexibility of usage, non-classic interpolants such as with barycentric form is less computationally attractive compared to the classic trigonometric interpolation described in Theorem \ref{ryts_th_3_6}, which can be easily implemented by FFT.  The major disadvantage of classic trigonometric interpolation is Limitation \ref{limitation_item3} mentioned above. 

As the first part of our study on trigonometric interpolation and its applications,  we introduce an adjusted version of Theorem \ref{ryts_th_3_6} that can be carried out by FFT with optimal operations. The adjustment generates some minor error on half of the nodes as downside. But advantage is quite significant. It aligns the number of interpolation nodes with degree of trigonometric polynomial so that we can leverage FFT power more efficiently. For performance analysis, the alignment enables us to leverage periodicity and symmetry of certain quantities occurred in study of estimation error. For example, we are able to derive a simple relation of interpolants when grid points are doubled (Lemma \ref{lemma-key-pattern}) and improve the error rate from $M^{-K+\frac12}$ in Theorem \ref{ryts_th_3_6} to $M^{-K}$. More importantly, we are able to establish uniform convergence theorem with rate $O(\frac{1}{N^{K-k}})$ for $k$-th ($k<K$) derivative of $f$ as well, and thus provide a theoretic support for certain applications such as solving differential/integral equations or searching a optimal solution where the gradient of target function need to be carried out efficiently. 

In the second part of this study, we shall enhance the algorithm to cover a non-periodic function $f$ defined over a bounded interval $[s,e]$. Instead of transforming $f$ to a periodic function described in Eq (\ref{cos_transform}),  we assume that $f$ can be smoothly defined over $[s-\delta, e+\delta]$ for some $\delta>0$. We then extend periodically $f$ by a cut-off function to keep the smoothness of $f$.  Compared to Chebyshev polynomials, the algorithm outputs a trigonometric interpolant with high rate convergence as well as a attractive analytic form. Note that the algorithm can be applied to piece-wise smooth functions by restricting to each of smooth parts separately. Such extension is not unique and we propose a simple solution with quite decent testing results as shown in Section \ref{sec:Numerical_Performance}.

Considering the analytic attractiveness of trigonometric polynomial,  especially in handling differential and integral operations, we expect that the proposed trigonometric estimation of a general function can be used in a wide spectrum.  As a starting point for subsequential applications, we show that it can be used to estimate a general integral with stabler and more accurate performance than popular Trapezoid and Simpson rules in copying with volatile integrands.  A new algorithm (Algorithm \ref{alg_ode}) is proposed to solve first order linear ordinary differential equation (ODE) since the solution can be formulated by integral.  For a more sophisticated application, we make effort to develop an algorithm (Algorithm \ref{alg:ode_general}) to solve a general first order non-linear ODE by searching a trigonometric representation of target function. The algorithm is formulated in a way to fully leverage FFT so it can be carried out efficiently.  The solution is expected to converge more quickly than the standard Runge-Kutta method (RK4) \cite{ptvf}. The testing results confirm that it outperforms significantly not only RK4, but a modified RK4 where estimation at each step is based on the true value of previous step (See details in Section \ref{application_ode_general}). The same optimization method of solving first order non-linear ODE can be extended to solve first order non-linear $d$-dim ODE system \cite{zou_ODE_system}.  Similar idea can be applied to approximate solutions of a high order non-linear ODE with flexible initial and boundary conditions \cite{zou_ODE}. The trigonometric interpolation can also be used to transform high order linear integro-differential equations to linear algebraic system and thus can be used to solve Fredholm/Volterra type IDE effectively \cite{zou_ODE_ide_fredholm} and \cite{zou_ODE_ide_volterra}.  

For a general non-linear ODE over a bounded interval $[s,e]$ with certain combination of initial and boundary conditions, one can first extend the variable domain and then search through similar optimization process for a periodic solution such that its restriction to $[s,e]$ solves original ODE. The optimization error function is derived from the ODE and captures desired structure among the target function and its relevant derivatives. As such,  the method is not limited to linear ODE as long as the gradient vector can be efficiently carried out, which, fortunately, can be done through FFT as appears in first order non-linear ODE.  It is worthwhile to compare the optimization method to a popular reproducing kernel method in solving high order ODE with boundary value conditions.  In a nutshell, the latter applies certain reproducing kernel to construct orthogonal bases for a target Hilbert space and the solution of certain ODE problem can be estimated by the projection to a finite subspace generated by certain number of basis functions.  The solution can be obtained by solving a linear system based on linearity of the associated differential operator. As such,  two methods are similar in the sense that they both estimate the solution through basis function in a Hilbert space,  but kernel based method is often limited to linear ODE with boundary value conditions while the optimization method can be applied to any high non-linear ODE with combination of initial and boundary conditions in theory. See  \cite{ode_yuyuse}, \cite{ode_desa}, \cite{ode_zhlish}, \cite{ode_desa}, \cite{ode_bumozh13} and \cite{ode_liwu36} for some reproducing kernel method for solving higher order linear ODE and Integro-differential equations

An optimal error estimation of trigonometric interpolations can be quite challenging. Certain cancellation effect, which avoids error propagation, might be one of the major reasons why the performance tends to be better than what can be estimated vigorously, as explained in Item \ref{rem:test_2_performance_part5} of Remark \ref{rem:test_2_performance}. The challenge to sufficiently leverage the cancellation effect could be the major obstacle for an optimal performance analysis.  The error estimation in this study depends more on skillfully maneuvering saw-tooth qualities and simplifying them in proper format.  Without loss of generosity, we assume the target periodic function $f$ is either even or odd since any function can always be decomposed as the sum of an even and odd functions. The symmetry and periodicity of $f$ play important rules in the analysis conducted in this study. 

The study is structured into two major parts. The first part focuses on the establishment of main theorems and is organized as follows.  In Section \ref{sec:main_results}, we present the major results, including the new trigonometric interpolation algorithm on periodic functions mentioned above, relevant convergence properties, as well as the enhancement of the algorithm for its use to non-periodic functions and its performance.  Section \ref{sec:The_Development_of_the_Main_Theorems} is mainly used to establish the algorithm (Theorem \ref{main_thm}),  estimations on coefficients of interpolants (Theorem \ref{converence_coef}) and convergence properties (Theorem \ref{conv_k_new}). The proof depends on a few key equations whose derivations are partly moved to \ref{pf_keylemma}. 

The second part of this study is organized as follows. Section \ref{sec:Trigonometric_Interpolation_of_General_Functions} is used to develop Algorithm \ref{alg_textenion} for the enhancement of trigonometric interpolation for non-periodic function.  In Section \ref{sec:Numerical_Performance}, we conduct some numerical  tests. The result confirms that the performance of Algorithm \ref{alg_textenion} is sensitive to the smoothness of $f$ and is quite satisfactory when $f$ is sufficient smooth. In addition, we explain that error of trigonometric approximation often exhibits cancellation effect and thus does not propagate and generate significant compounding errors, a remarkable advantage compared to polynomial-based approximation.  Section \ref{sec:Applications} is devoted to the applications outlined above on the third purpose of this paper.  The summary is made on Section \ref{sec:Conclusions}.

\section{Main Results}\label{sec:main_results}
\label{sec:main}
In this subsection,  unless otherwise specified, $f(x)$ denotes either an even or odd periodic function with period $2b>0$ and its $K+1$-th derivative $f^{(K+1)}(x)$ exists and is bounded by $D_{K+1}$ for some positive integer $K\ge 1$.  Eq. (\ref{fx}) is reduced to 
\begin{eqnarray*}\label{fxeven}
	f(x)=\left\{\begin{array}{cc}
		\frac{A_0}2 + \sum_{ j \ge  1} A_j \cos\frac{\pi j x}b, & \mbox{if $f$ is even,}  \\
		\sum_{j \ge 1} B_j \sin\frac{\pi j x}b, & \mbox{if $f$ is odd.}
	\end{array}\right.
\end{eqnarray*}

For a given even integer $N=2M=2^{q+1}$, Theorem \ref{ryts_th_3_6} provides us trigonometric interpolant that recovers the following grid nodes:  
\begin{eqnarray}
	x_j &:=& -b + j\lambda,  \quad \lambda= \frac{2b}N , \quad 0\le j <N, \label{x_grid_N} \\
	y_j &:=& f(x_j),  \label{f_N_interpolation_new} \\
	y_j&=& \left\{\begin{array}{cc} \nonumber
		y_{N-j} & \mbox{if $f$ is even,}  \\
		-y_{N-j} & \mbox{if $f$ is odd.}
	\end{array}\right.
\end{eqnarray}

If $f$ is odd, then $f(-b)=f(0)=0$ and there are $M-1$ free points $\{(x_j,y_j)\}_{1\le j<M}$, aligned with the number of coefficients in Eq. (\ref{ryts_b}) and $a^o_j$ can be solved by FFT with optional operations. 

The situation for even case is slightly different.  There are $M+1$ free points $\{(x_j,y_j)\}_{0\le j\le M}$ aligned with $M+1$ coefficients to ensure uniqueness of interpolant. There are
two undesired features for us to conduct error analysis. 1) $a^e_M$ defined by Eq. (\ref{ryts_aM}) is not consistent with derived $a^e_M$ by Eq. (\ref{ryts_aj_even}), and 2) there are odd number $M+1$ terms in $\tilde{Q}_M(x)$ defined by Eq. (\ref{ryts_f_M_interpolation_new}).  A solution to address this issue is to combine first and last terms of $\tilde{Q}_M(x)$ whose impact on  node $x_k$ is
\[
a_0+a_M\cos\frac{\pi Mx_k}b  =a_0 + (-1)^ka_M = \left\{\begin{array}{cc}
	\frac{1}M \sum_{0\le j < M}y_{2j}& \mbox{if $f$ is even,}  \\
	\frac{1}M \sum_{0\le j < M}y_{2j+1} & \mbox{if $f$ is odd.}
\end{array}\right.
\]
By replacing $a^e_0+a^e_M\cos\frac{\pi M x}{b}$ by $a_0=\frac{1}M \sum_{0\le j < M}y_{2j}$, and keeping other coefficients, we obtain a new polynomial that fits to all even nodes $x_{2j}$ and approximate to all odd nodes $x_{2j+1}$ with a uniform error:
\begin{equation}\label{eps_M}
	\epsilon_M = \frac{1}M \sum_{0\le j < N} (-1)^j y_{j} 
\end{equation}
We thus obtain  
\begin{Thm}\label{main_thm} Let $f(x)$ be a periodic function with period $2b$ and $N=2M$ be an even integer and $x_j, y_j$ are defined by Eq. (\ref{f_N_interpolation_new}).
	\begin{itemize}
		\item   If $f(x)$ is even, then there is a unique $M-1$ degree trigonometric polynomial
		\begin{eqnarray}
			f_M(x) &=& \sum_{0\le j <M}a_j \cos\frac{j\pi x}b, \label{f_M_interpolation_new}\nonumber\\
			a_0&=& \frac1M\sum_{0\le j <M}y_{2j}, \label{a0}\\
			a_j&=&\frac{2}N\sum_{0\le k <N}(-1)^j y_k \cos\frac{2\pi j k}{N},  \quad 1\le j <M,  \label{aj_even}
		\end{eqnarray}
		such that for $0\le k <M$,  
		\begin{eqnarray}
			f_M(x_{2k})  &=& y_{2k} ,  \label{error_even_new} \\
			f_M(x_{2k+1}) &=&  y_{2k+1} +\epsilon_M.  \label{error_odd_new}
		\end{eqnarray}
		In another word,  $f_M(x)$ fits to all even grid points and shifts away in parallel from all odd grid points by $\epsilon_M$. 
		\item If $f(x)$ is odd, then there is a unique $M-1$ degree trigonometric polynomial
		\begin{eqnarray*}
			f_M(x) &=& \sum_{0\le j <M}a_j \sin\frac{j\pi x}b, \label{f_M_interpolation_new_odd}\\
			a_j&=&\frac{2}N\sum_{0\le k <N} (-1)^j y_k \sin\frac{2\pi j k}{N}, \quad 0\le j <M \label{aj_odd}
		\end{eqnarray*}
		such that it fits to all grid points, i.e.
		\[	
		f_M(x_{k})=y_{k}, \quad  0\le k <N.
		\]
	\end{itemize}	
\end{Thm}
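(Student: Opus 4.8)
The plan is to derive both cases as reductions of Theorem~\ref{ryts_th_3_6} applied to the data $\{(x_j,y_j)\}_{0\le j<N}$, using the symmetry $y_{N-j}=\pm y_j$ to annihilate half of the coefficients, and then to settle uniqueness by a root-counting argument rather than by inverting any interpolation matrix directly.

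The odd case is the easier one, and I would dispose of it first. The hypothesis $y_{N-j}=-y_j$, together with the substitution $k\mapsto N-k$ in (\ref{ryts_a0}), (\ref{ryts_aj_even}) and (\ref{ryts_aM}), forces every cosine coefficient to vanish, $a^e_0=a^e_j=a^e_M=0$; the self-paired indices $k=0$ and $k=M$ contribute nothing because $y_0=f(-b)=0$ and $y_M=f(0)=0$ for odd $f$. Hence $\tilde Q_M$ collapses to $\sum_{1\le j<M}a^o_j\sin(j\pi x/b)$, which is exactly the asserted $f_M$ (the $j=0$ term being identically zero), and it already interpolates every node, so no adjustment is required.

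The even case is the substantive part. Here $y_{N-j}=y_j$ kills the sine coefficients $a^o_j$, leaving the awkward $(M+1)$-term polynomial $\tilde Q_M(x)=\sum_{0\le j\le M}a^e_j\cos(j\pi x/b)$ whose top coefficient (\ref{ryts_aM}) has a form inconsistent with (\ref{ryts_aj_even}). The key computation is that at the nodes $\cos(M\pi x_k/b)=\cos(\pi(k-M))=(-1)^k$ (using $\lambda=b/M$ and that $M=2^q$ is even), so the combined head-and-tail contribution at $x_k$ is $a^e_0+(-1)^k a^e_M=\frac1N\sum_j(1+(-1)^{j+k})y_j=\frac1M\sum_{j\equiv k\ (2)}y_j$. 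I would then define $f_M$ by replacing $a^e_0+a^e_M\cos(M\pi x/b)$ by the constant $a_0:=\frac1M\sum_{0\le i<M}y_{2i}$ and keeping $a_j:=a^e_j$ for $1\le j<M$, i.e.\ $f_M=\tilde Q_M-a^e_0-a^e_M\cos(M\pi x/b)+a_0$. Evaluating at a node gives $f_M(x_k)=y_k-(a^e_0+(-1)^k a^e_M)+a_0$; for even $k$ the bracket equals $a_0$, recovering the node exactly and proving (\ref{error_even_new}), whereas for odd $k$ the bracket equals $\frac1M\sum_i y_{2i+1}$ and the residual is precisely $a_0-\frac1M\sum_i y_{2i+1}=\frac1M\sum_{0\le j<N}(-1)^j y_j=\epsilon_M$, proving (\ref{error_odd_new}).

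Finally, uniqueness in both cases follows from one observation: if two admissible interpolants satisfied all the stated conditions, their difference would be a cosine (resp.\ sine) polynomial of degree at most $M-1$ vanishing at all $N=2M$ distinct nodes of the period $[-b,b)$; since a nonzero trigonometric polynomial of degree $M-1$ has at most $2M-2<2M$ zeros per period, that difference must vanish identically. The step I expect to require the most care is the even case, where one must resist arguing uniqueness from the $M$ even-node conditions alone: that subsystem is actually rank-deficient, because $\cos(j\pi x/b)$ and $\cos((M-j)\pi x/b)$ coincide on the even nodes, so the shifted odd-node conditions are genuinely needed, and the clean route is the global root count just described.
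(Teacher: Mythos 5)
Your proof is correct, but it follows a different route from the paper's in both halves. For existence, you derive the theorem as a reduction of Theorem~\ref{ryts_th_3_6}: the symmetry $y_{N-j}=\pm y_j$ annihilates the sine (resp.\ cosine) coefficients, and in the even case you merge the head and tail terms via $a^e_0+(-1)^k a^e_M=\frac1M\sum_{j\equiv k\,(2)}y_j$ before substituting the constant $a_0$. This is exactly the construction the paper sketches as motivation in Section~\ref{sec:main_results} but deliberately does \emph{not} use as its proof; instead, to stay self-contained, the paper introduces an auxiliary polynomial $\tilde f_M$ (with constant term $\frac2N\sum y_k$ and no degree-$M$ term) and computes its node values from scratch in Lemma~\ref{keylemma} by expanding double sums against the identities (\ref{identity_cos_sum}) and (\ref{trig_odd_sum}), then shifts by a constant. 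Your residual computations at even and odd nodes match the paper's conclusions exactly, including the identification of the odd-node defect with $\epsilon_M$. For uniqueness, the divergence is sharper: the paper multiplies the interpolation conditions by $\cos\frac{2\pi kl}{N}$ and sums, exploiting discrete orthogonality (and the vanishing of $\epsilon_M\sum_k\cos\frac{2\pi(2k+1)l}{N}$) to recover each $a_l$; you instead count zeros, noting that the difference of two admissible interpolants is a degree-$(M-1)$ trigonometric polynomial vanishing at $2M>2(M-1)$ points of a period. Both are valid; your root count is shorter, while the paper's orthogonality computation has the side benefit of re-deriving the coefficient formulas explicitly. Your observation that the even-node conditions alone are rank-deficient (since $\cos\frac{j\pi x}{b}$ and $\cos\frac{(M-j)\pi x}{b}$ agree on the even nodes) is a correct and worthwhile caution absent from the paper. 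One small point: your identity $\cos(M\pi x_k/b)=(-1)^k$ requires $M$ to be even, which you flag; the theorem as literally stated only assumes $N=2M$ even, but the paper's own derivation makes the same tacit assumption and its standing convention $N=2^{q+1}$ guarantees it, so this is a matter of being explicit rather than a gap.
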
 
To keep self-contained, we provide an elementary proof of Theorem \ref{main_thm} in Subsection \ref{subsec:proof_main_thm} although it is a direct conclusion of Theorem  \ref{ryts_th_3_6}.  A few remarks are in order.  
\begin{Rem}
	\begin{enumerate}
		\item The algorithm is easy to be implemented and computationally efficient.  We can computer them by Inverse Fast Fourier Transform (ifft),
		\[	
		\{a_j (-1)^j\}_0^{N-1} = \left\{\begin{array}{cc}
			2\times Real (ifft(\{y_k\}_{k=0}^{N-1})), & \mbox{if $f$ is even },  \\
			2\times Imag (ifft(\{y_k\}_{k=0}^{N-1})), & \mbox{if $f$ is odd}.
		\end{array}\right.
		\]
		and replace $a_0$ by Eq (\ref{a0}) if $f$ is even. To fully leverage power of FFT,  $N$ should be a radix-$2$ integer, i.e. $N=2^h$ for a positive integer $h$ and operation cost of ifft is $\frac{N}2\log_2N$ as shown in \cite{ct}. 
		
		\item If $f$ is even,  the error $\epsilon_M$ by Eq (\ref{eps_M}) is $O(\frac1{N^{K+1}})$ by applying following Euler-Maclaurin identity:
		\begin{equation}\label{Euler-Maclaurin}
			h\sum_{0\le l <n-1}f(l h) = \int^{b}_{-b} f(x)dx -(\frac{-2b}{n})^{K+1}\int^{b}_{-b}\tilde{B}_{K+1}(\frac x{2b}) f^{(K+1)}(x)dx
		\end{equation}
		where $n\ge 2$ is positive integer and  $h=2b/n$ and $\tilde B_{K+1}$ is the periodic extension of $K+1$-th Bernoulli polynomial \cite{Rainer_Kress}.
		\item The uniqueness of solution is helpful for certain applications where we depend on some optimization process to find $f_M$, as shown in Section \ref{sec:Applications}.
	\end{enumerate}
\end{Rem}
It is not hard to see $a_j$ ($j\ge 1$) is Trapezoidal approximation of Fourier expansion coefficient $A_j$ or $B_j$.   It is natural to expect that $a_j$ approaches to $0$ (as $A_j$ does) as $j\to \infty$. Theorem \ref{converence_coef} provides a boundary of $a_j$ in $j$ and $N$. 

\begin{Thm}\label{converence_coef}
	Assume that $|f^{(K+1)}(x)|$ exists with an upper bound $D_{K+1}$, then 
	\begin{equation}\label{boundary2}
		|a_j| \le  \frac{C(D_{K+1})}{N^{K+1}\sin^{K+1}\frac{\pi j}{N}}, \quad  1\le j <M,
	\end{equation}
	where $C(D_{K+1})$ is a constant depending on $D_{K+1}$.
\end{Thm}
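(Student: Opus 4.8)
The plan is to treat $a_j$ as the real (respectively imaginary) part of a discrete Fourier sum and to reproduce, at the discrete level, the repeated integration-by-parts that forces the genuine Fourier coefficients $A_j,B_j$ to decay like $O(j^{-(K+1)})$. Summation by parts will play the role of integration by parts, and the $N$-periodicity of the sampled values $\{y_k\}$ will play the role of the vanishing boundary terms for a periodic integrand.

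First I would pass to complex form. Writing $\sigma=e^{2\pi ij/N}$ and $S_j=\sum_{0\le k<N}y_k\sigma^k$, the formula (\ref{aj_even}) and its odd analogue give $a_j=\frac2N(-1)^j\,\mathrm{Re}\,S_j$ in the even case and $a_j=\frac2N(-1)^j\,\mathrm{Im}\,S_j$ in the odd case, so in both cases $|a_j|\le\frac2N|S_j|$. Two structural facts are what make the argument work: since $1\le j<M=N/2$ we have $\sigma\ne1$, and since $f$ has period $2b$ the samples obey $y_{k+N}=y_k$, so $\{y_k\}$ is $N$-periodic and $\sigma^N=1$.

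Next I would extract decay from $S_j$ by summation by parts. Reindexing using $N$-periodicity and $\sigma^N=1$ so that the boundary contributions cancel exactly, one obtains the identity $(\sigma-1)S_j=-\sum_{0\le k<N}(\nabla y_k)\sigma^k$, where $\nabla y_k=y_k-y_{k-1}$. Iterating it $K+1$ times gives
\[
(\sigma-1)^{K+1}S_j=(-1)^{K+1}\!\!\sum_{0\le k<N}(\nabla^{K+1}y_k)\sigma^k,
\qquad
|S_j|\le\frac{1}{|\sigma-1|^{K+1}}\sum_{0\le k<N}|\nabla^{K+1}y_k|.
\]
Because $|\sigma-1|=2\sin\frac{\pi j}{N}$, the denominator already supplies the desired factor $\sin^{K+1}\frac{\pi j}{N}$. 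It then remains only to bound the top-order difference: by the mean value theorem for finite differences, since $f$ is $(K+1)$-times differentiable on equally spaced nodes of step $\lambda=2b/N$, each $\nabla^{K+1}y_k$ equals $\lambda^{K+1}f^{(K+1)}(\xi_k)$ for some intermediate $\xi_k$, so $|\nabla^{K+1}y_k|\le\lambda^{K+1}D_{K+1}$. Summing over the $N$ indices and folding in the prefactor $\frac2N$ collapses everything to $|a_j|\le\frac{2b^{K+1}D_{K+1}}{N^{K+1}\sin^{K+1}\frac{\pi j}{N}}$, i.e. the claim with $C(D_{K+1})=2b^{K+1}D_{K+1}$.

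The main obstacle is the last step, the only one that actually consumes the smoothness hypothesis rather than pure algebra: converting the discrete $(K+1)$-th difference into the continuous $(K+1)$-th derivative with the correct power of $\lambda$, via a Rolle-type (finite-difference mean value) argument, while keeping the evaluation point $\xi_k$ inside an interval on which $|f^{(K+1)}|\le D_{K+1}$ is available. Everything preceding it is exact and driven by periodicity, so the delicate point is simply to verify that the boundary terms in the repeated summation by parts genuinely vanish (not merely telescope), which is exactly where $\sigma^N=1$ and $y_{k+N}=y_k$ must be invoked honestly.
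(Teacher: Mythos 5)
Your proof is correct, and while it rests on the same two pillars as the paper's argument --- iterated summation by parts exploiting the $N$-periodicity of the samples, followed by conversion of the resulting $(K+1)$-th finite difference into $\lambda^{K+1}f^{(K+1)}(\xi_k)$ --- the execution is genuinely different and noticeably cleaner. The paper works entirely in real form: it applies the Abel transform (\ref{abel_special}) against the partial sums of $\cos$ and $\sin$ from (\ref{sin_sum})--(\ref{cos_identity}), which forces it to carry the two families $\phi_{l,k},\psi_{l,k}$ and a growing binomial expansion in $w=\cot\frac{\pi l}{N}$, and it only recovers the factor $\sin^{-(K+1)}\frac{\pi l}{N}$ at the end by resumming that expansion as $\frac12\bigl((1+iw)^{K+1}\pm(1-iw)^{K+1}\bigr)$. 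You instead diagonalize the shift from the outset by writing $a_j$ as $\frac2N(-1)^j$ times the real or imaginary part of $S_j=\sum_k y_k\sigma^k$ with $\sigma=e^{2\pi ij/N}$; then each summation-by-parts step contributes exactly one factor $(\sigma-1)^{-1}$ of modulus $\bigl(2\sin\frac{\pi j}{N}\bigr)^{-1}$, so the target denominator appears immediately and no trigonometric recombination is needed. Your route also produces an explicit constant $C(D_{K+1})=2b^{K+1}D_{K+1}$, whereas the paper leaves $C(D_{K+1})$ generic. The points you flag as delicate are handled correctly: $\sigma\ne1$ because $1\le j<M=N/2$, the boundary terms in the reindexing cancel exactly because $y_{k+N}=y_k$ and $\sigma^N=1$ (this is the same cancellation the paper encodes as $\Gamma_{n-1}=0$ in (\ref{abel_special})), and the finite-difference mean value theorem you invoke is the standard statement the paper replaces by the Taylor-expansion identity (\ref{deltakyj}); either version suffices since $f^{(K+1)}$ exists and is bounded by $D_{K+1}$ on the whole period.
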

Notice that $a_j$ depends on $j$ and $N$, and the estimation (\ref{boundary2}) shows how $a_j$ decays to $0$ in two dimensions.  For a given $j$,  one can see $|a_j|$ has order $O(\frac{1}{j^{K+1}})$ as $N\to\infty$, which is consistent to the order of Fourier coefficient $A_j$.  For a given interpolant $f_M$ with a fixed large $N$, magnitude of $a_j$ approaches to $0$ at order $\frac{1}{N^{K+1}}$ as $j\to M$. It is worthwhile to point out the second half coefficients $\{a_j\}_{M/2\le j<M}$ decays uniformly with $\frac{1}{N^{K+1}}$, which is one of key observations to establish convergence Theorem \ref{conv_k_new}.

The proof of Theorem \ref{converence_coef} mainly depends on expressing $a_j$ in term of $K+1$-th forward difference as shown in Eq. (\ref{a_l}) and key ingredient is classic Abel Transform. Details can be found in Section \ref{subsec:proof_converence_coef}.

With Estimation  (\ref{boundary2}), we can show uniform convergence properties of $f_M(x)$ as below. 
\begin{Thm}\label{conv_k_new}
	Assume that $|f^{(K+1)}(x)|$ exists with an upper bound $D_{K+1}$, then 
	\begin{eqnarray}
		|f_M(x) - f(x)| &\le& \frac{C_1({D_{K+1}})}{N^{K}},   \label{detal_M_f} \\
		|f^{(k)}_M(x) - f^{(k)}(x)| &\le& \frac{C_2({D_{K+1}})}{N^{K-k}}, \quad 1\le k <K. \label{der_detal_M_f}
	\end{eqnarray}
	where $C_1({D_{K+1}})$ and $C_2({D_{K+1}})$ are two constants depending on $D_{K+1}$.
\end{Thm}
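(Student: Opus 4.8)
The plan is to compare $f_M$ against the full Fourier series of $f$, splitting the error into a \emph{truncation} contribution and an \emph{aliasing} (coefficient) contribution. I treat the even case; the odd case is identical under $\cos\leftrightarrow\sin$. Since $f_M$ carries the coefficients $a_0,\dots,a_{M-1}$ while $f$ has the full expansion (with constant term $A_0/2$), I would write
\[
f(x)-f_M(x)=\sum_{j\ge M}A_j\cos\frac{j\pi x}{b}+\left(\frac{A_0}2-a_0\right)+\sum_{1\le j<M}(A_j-a_j)\cos\frac{j\pi x}{b},
\]
where the first sum is the truncation term and the remainder is the aliasing term. The truncation tail is immediate: the classical decay $|A_j|\le D\,j^{-(K+1)}$ stated in the introduction gives $\sum_{j\ge M}|A_j|\le C\sum_{j\ge M}j^{-(K+1)}=O(M^{-K})=O(N^{-K})$.

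For the aliasing term I would split the sum at $j=M/2$ and handle the two bands differently. In the high band $M/2\le j<M$ I invoke Theorem~\ref{converence_coef}: there $\sin\frac{\pi j}{N}\ge\sin\frac{\pi}{4}$, so (\ref{boundary2}) yields $|a_j|\le C\,N^{-(K+1)}$, while $|A_j|\le D(M/2)^{-(K+1)}=O(N^{-(K+1)})$; hence each $|A_j-a_j|=O(N^{-(K+1)})$ and the $\sim N/4$ terms sum to $O(N^{-K})$. In the low band $1\le j<M/2$ the two coefficients are individually comparable, so Theorem~\ref{converence_coef} alone is not enough; here I would use the standard aliasing identity, which expresses $a_j$ as $A_j$ plus the aliased coefficients $A_{j+mN}$, $m\neq0$ (up to signs produced by the factor $(-1)^j$ and the half-cell node shift $x_j=-b+j\lambda$), obtained by substituting $y_k=\sum_l A_l\cos\frac{l\pi x_k}{b}$ into (\ref{aj_even}) and collapsing with the discrete orthogonality relations. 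Since for $j<M/2$ every alias satisfies $|j+mN|\ge N-M/2>N/2$, this gives $|A_j-a_j|\le C\sum_{m\neq0}|A_{j+mN}|=O(N^{-(K+1)})$, and again the $\sim N/4$ terms sum to $O(N^{-K})$. The constant term obeys $|A_0/2-a_0|=O(N^{-(K+1)})$ by the Euler--Maclaurin estimate (\ref{Euler-Maclaurin}) applied to the $M$-point average defining $a_0$. Collecting the three $O(N^{-K})$ contributions proves (\ref{detal_M_f}).

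For the derivative bound (\ref{der_detal_M_f}), note that for $k<K$ the series $\sum_j j^k|A_j|$ converges, so $f^{(k)}$ equals the term-by-term differentiated series and
\[
f^{(k)}(x)-f^{(k)}_M(x)=\sum_{j\ge M}A_j\left(\frac{j\pi}b\right)^k c_{k,j}(x)-\sum_{1\le j<M}(A_j-a_j)\left(\frac{j\pi}b\right)^k c_{k,j}(x),
\]
where each $c_{k,j}$ is $\pm\cos\frac{j\pi x}{b}$ or $\pm\sin\frac{j\pi x}{b}$. I would rerun the two estimates above with the extra weight $j^k$. The truncation tail becomes $\sum_{j\ge M}j^k|A_j|\le C\sum_{j\ge M}j^{k-(K+1)}=O(M^{-(K-k)})=O(N^{-(K-k)})$, which converges precisely because $k<K$. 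For the aliasing sum, bounding $j^k\le(N/2)^k$ on $[1,M)$ converts the already established $\sum_{1\le j<M}|A_j-a_j|=O(N^{-K})$ into $O(N^{k-K})=O(N^{-(K-k)})$. This yields (\ref{der_detal_M_f}).

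The main obstacle is the low band of the aliasing term, in two respects. First, one must establish the aliasing identity cleanly while correctly tracking the sign $(-1)^j$ and the half-cell shift built into the nodes. Second, one must control the low-band coefficient error at the sharp rate $O(N^{-(K+1)})$ per coefficient: a crude Euler--Maclaurin bound applied to $f(x)\cos\frac{j\pi x}{b}$ loses a factor $j^{K+1}$ and fails, and it must be replaced by the aliasing representation, whose terms decay because every alias sits at frequency at least $N/2$. Once this is in hand, the truncation estimates and the passage to derivatives are routine, the only constraint being the summability $k<K$.
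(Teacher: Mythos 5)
Your argument is correct, but it takes a genuinely different route from the paper. You decompose $f-f_M$ against the exact Fourier series into a truncation tail plus a coefficient (aliasing) error, and control the low-frequency band by the aliasing identity $a_j=A_j+\sum_{m\ge1}\bigl(A_{mN-j}+A_{mN+j}\bigr)$ (the signs indeed collapse to $+1$ because $N$ is even, so $(-1)^{j}(-1)^{j+mN}=1$), which forces every alias to sit at frequency at least $N-j>N/2$ and hence contributes $O(N^{-(K+1)})$ per coefficient. The paper never compares $a_j$ with $A_j$ at all: it proves the doubling relation $a_j^{N}=a_j^{2N}+a_{N-j}^{2N}$ (Lemma \ref{lemma-key-pattern}), writes $\Delta_M=f_M-f_{2M}$ explicitly as a sum of $O(N)$ terms whose coefficients are the \emph{second-half} coefficients $a_j^{2N}$ ($M\le j<N$), bounds those uniformly by $C/N^{K+1}$ using Theorem \ref{converence_coef}, and then telescopes over dyadic refinements $N\to 2N\to 4N\to\cdots$, identifying the uniform limit with $f$ via convergence on the dense set of dyadic grid points; derivatives are handled by differentiating $\Delta_M$ term by term before telescoping. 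Your aliasing identity is essentially the infinite iteration of the paper's one-step doubling lemma, so the two proofs are close in spirit; what yours buys is a more transparent accounting of where the error lives (tail versus aliased frequencies) using only classical ingredients, at the cost of needing the decay $|A_j|=O(j^{-(K+1)})$ of the true Fourier coefficients and a careful justification of the sign bookkeeping and the interchange of the infinite Fourier sum with the finite node sum (legitimate here since $\sum_j|A_j|<\infty$ for $K\ge1$). What the paper's route buys is independence from the continuous Fourier coefficients: everything is driven by the discrete bound of Theorem \ref{converence_coef} and the structural identity that the adjusted interpolation scheme was designed to produce.
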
		
The error estimation (\ref{detal_M_f})  is different from \cite{ryts}.  Estimation (\ref{ryts_error}) of Theorem \ref{ryts_f_M_interpolation_new} is based on breakdown $f_M(x)$ into two components,  one is partial sum \footnote{We explain ideas by assuming $f$ is even. }
\[
S(x) = \frac {A_0}2 + \sum_{1\le k \le M} A_k \cos\frac{j\pi x}{b} 
\]
and other is the interpolant of residue $(\delta S)(x):=f(x)-S(x)$.  By uniqueness of interpolant for given set of grid points, one obtains 
\[
f_M(x) = S_M(x) + (\delta S)_M(x) = S(x) + (\delta S)_M(x),
\]
and therefore
\[
|R_M(x)| = |f(x)-f_M(x)| \le |(\delta S)(x)| + |(\delta S)_M(x)|.
\]
The overall convergent rate $N^{-K+0.5}$ of $R_M(x)$ is determined by the rate of $|(\delta S)_M(x)|$ that converges slower than $|(\delta S)(x)|$. Details can be found in \cite{ryts}.

The proof of Theorem \ref{conv_k_new} can be found in Section \ref{subsec:proof_conv_k_new}. We directly copy with $f_M$. As such,  we not only get extra accuracy rate by avoiding  $(\delta S)_M(x)$, but be able to obtain accuracy rate on derivatives.   The adjustment on coefficients on Theorem \ref{ryts_f_M_interpolation_new} makes it possible for us to derive a clean pattern of adjusted coefficients when interpolating grid points are doubled (Lemma \ref{lemma-key-pattern}), which plays a key rule in establishment of Estimation (\ref{detal_M_f})-(\ref{der_detal_M_f}).

For a non-periodic function $f$ over a bounded interval $[s-\delta,e+\delta]$ for some $\delta>0$, we extend $f$ to a periodic function with same smoothness by a cut-off function $h(x)\in C^{\infty}(R)$ with following properties: 
\begin{eqnarray*}
	h(x)=\left\{\begin{array}{cc}
		1 & x\in [s,e], \\
		0 & \mbox{$x<s-\delta$ or $x>e+\delta$}. \\
	\end{array}\right.
\end{eqnarray*}
A simple cut-off function $h(x)$ (Eq (\ref{cut-off-formula}) in Section \ref{sec:Trigonometric_Interpolation_of_General_Functions} is proposed and an enhanced trigonometric interpolation method is formulated in  Algorithm \ref{alg_textenion} in Section \ref{sec:Trigonometric_Interpolation_of_General_Functions}. The output $\hat{f}_M(x)$ interpolates $f(x)|_{[s,e]}$.  Some test results show that $\hat f$ has high degree accuracy as shown in Section \ref{subsec:numericalperformance_on_period_functions}.  We also demonstrate numerical evidences that the error of $\hat{f}_M(x)$ likely exhibits ``local property", i.e. error at a point tends not to propagate and cause significant compounding error at other place, which is not the case for polynomial-based approximations as shown in Section \ref{application_ode_general}. As such, the performance of $\hat {f}(x)$ is likely better than what is concluded in Theorem \ref{conv_k_new}. Details can be found in Section \ref{subsec:error_pattern}.  Algorithm \ref{alg_textenion} is applied to estimate integrals and solve linear/non-linear ordinary differential equation (ODE) as outlined in Algorithm \ref{alg_ode} and \ref{alg:ode_general} in Section \ref{sec:Applications}. The test results show that it outperforms Trapezoid/Simpson method to copy with integral and standard Runge-Kutta algorithm in handling ODE.

\section{The proof of Theorem \ref{main_thm}, \ref{converence_coef} and \ref{conv_k_new}} \label{sec:The_Development_of_the_Main_Theorems}
This section is used to set stage for the framework to be built and prove three theorems introduced in Section \ref{sec:main}. It starts with reviewing some relevant identities and developing certain equations, and then proves each of covered theorems in three subsections.  

In this section,  $f$ denote a $2b$-periodic function with $K+1$ derivative bounded by $D_{K+1}$.  $C(D_{K+1})$ denote a generic constant that depends on $D_{K+1}$ and its value may change on different situations.  

\subsection{Preliminary Algebraic Tools}
The classic Abel's transform (\ref{abel}) corresponds to integration by parts in the theory of integration \cite{Zygmund}, and plays a key rule in the error analysis of this section. For any two sequences of numbers $\{\alpha_i, \gamma_i\}_{i=0}^{n-1}$,    
\begin{equation}\label{abel}
	\sum^{k-1}_{i=0} \alpha_i\gamma_i = \alpha_{k-1} \Gamma_{k-1} - \sum_{i=0}^{k-2}(\alpha_{i+1}-\alpha_i)\Gamma_i, \qquad 1\le k\le n, 
\end{equation}
where $\Gamma_i = \sum_{j=0}^i\gamma_j$.  Throughout this paper, for any sequences with $n$ elements,  we always treat them as periodic sequences with period $n$, i.e $\alpha_l=\alpha_{k}$ and $\gamma_l=\gamma_{k}$ if $l=k$ mod $(n)$.  Throughout discussion,  we might modify index range of a summation without further reminding as follows.
\[
\sum_{i=0}^{n-1} \alpha_i\gamma_i = \sum_{i=k}^{n-1+k} \alpha_i\gamma_i.
\]
Recall that, for a positive integer $k$, $k$-th forward difference is defined inductively by
\[
\Delta_1 \alpha_i:=\alpha_{i+1}-\alpha_i,  \quad \Delta_k \alpha_i =\Delta_{k-1}(\Delta_1\alpha_i).
\]
One can derive inductively 
\[
\Delta_k \alpha_i = \alpha_{k+i} - k\alpha_{k+i-1} \cdots +  (-1)^{j} C^j_k \alpha_{k+i-j}  \cdots  + (-1)^k  \alpha_{i}, 
\]
where $C^j_k$ is  $j$-th coefficient of binomial polynomial $(1+x)^k$. It is clear that $\{ \Delta_k\alpha_i \}_{i\in \mathbb{Z}}$ is periodic with same period $n$ as  $\{\alpha_i \}_{-\infty <i<\infty}$, and we have following simple but important identity for a periodic sequence. 
\begin{equation}\label{sum_delta_ai}
	\sum_{i=0}^{n-1}\Delta_k\alpha_i = 0.
\end{equation}
As a special case where $\Gamma_{n-1}=0$,  Eq (\ref{abel}) is reduced to
\begin{equation}\label{abel_special}
	\sum^{n-1}_{i=0} \alpha_i\gamma_i =-\sum_{i=0}^{n-1}\Delta_1\alpha_i\Gamma_i.
\end{equation}
Eq (\ref{abel_special}) is equivalent to cancellation of boundary terms occurred in integration by parts with periodic functions, and it plays a key rule in derivation of estimation (\ref{C_{K+1}_1}). 

Adapt notations in Section \ref{main_thm}, and note that $k$-th forward difference of $f$ at any given point $x$ is defined inductively by
\[
\Delta^1_{\lambda}[f](x) = f(x+\lambda) -f(x), \quad  \Delta^k_{\lambda}[f] (x) = \Delta^{k-1}_{\lambda}[f] ( \Delta^1_{\lambda}[f](x)).
\]
One can verify
\[
\Delta^k_{\lambda}[f] (x) =\sum_{0\le j \le k} (-1)^{j} C^j_k f(x+ (k-j)\lambda).
\]  
For any integer $p\le k$, let
\[
H(p,k) = \sum_{0\le j\le k} (-1)^j C_k^j j^p.
\]
It is not hard to prove by induction that 
\begin{eqnarray*}
	H(p,k)=\left\{\begin{array}{cc}
		k! & p=k, \\
		0 & p<k. 
	\end{array}\right.
\end{eqnarray*}
Applying Taylor expansion to each item in $\Delta^k_{\lambda}[f] (x)$ at $x$, there exists $\xi \in [x, x+k\lambda]$ such that
\begin{equation}\label{deltakyj}
	\Delta^k_{\lambda}[f] (x) = f^{(k)}(x)  \lambda^k  +C_k(x) \lambda^{k},  
\end{equation}
where $C_k(x)$ is bounded and $\lim_{\lambda\to 0}C_k(x)=0$ if $f^{(k)}$ exists and is bounded. 

Recall following trigonometric identities 
\begin{eqnarray}
	\sum_{j=0}^{n-1} \sin (jx) &=& \frac12\cot\frac x2 - \frac12\cot\frac x2 \cos nx - \frac12\sin nx \label{sin_sum},\\
	\sum_{j=0}^{n-1} \cos(jx) &=& \frac12 + \frac12 \cot\frac x2 \sin nx - \frac12 \cos nx.   \label{cos_identity}
\end{eqnarray}
for any $x$ such that $x/\pi$ is not an integer.  Let $n\ge 2$ be an integer.  By Eq. (\ref{sin_sum}) and (\ref{cos_identity}) with $x=\frac{2\pi k}{n}$, we obtain
\begin{equation}\label{identity_cos_sin_sum}
	\sum_{j=0}^{n-1} cos\frac{2\pi jk}{n} = \sum_{j=0}^{n-1} sin\frac{2\pi jk}{n} = 0.
\end{equation}
By taking $x=\frac{2\pi k}{2n}$, we have
\begin{eqnarray}\label{identity_cos_sum}
	\sum_{j=0}^{n-1} cos\frac{2\pi jk}{2n}=\left\{\begin{array}{cc}
		n & \mbox{if $k=0 mod(2n)$,}  \\
		1 & \mbox{else if $k$ is old,} \\
		0 & \mbox{else if $k$ is even,} 
	\end{array}\right.
\end{eqnarray}
and
\begin{eqnarray}\label{identity_sin_sum}
	\sum_{j=0}^{n-1} \sin\frac{2\pi jk}{2n}=\left\{\begin{array}{cc}
		\cot\frac{\pi k}{2n} & \mbox{if $k$ is odd,} \\
		0 & \mbox{if $k$ is even.} 
	\end{array}\right.
\end{eqnarray}

If $x/\pi$ is not an integer,  replacing $x$ by $2x$ and  $n$ by $2n$ in Eq. (\ref{cos_identity}) respectively, we obtain
\begin{eqnarray}
	\sum_{j=0}^{n-1} \cos(2jx) &=& \frac12 + \frac12 \cot x \sin 2nx - \frac12 \cos 2nx,   \label{cos_identity_2n_2x} \\
	\sum_{j=0}^{2n-1} \cos(jx) &=& \frac12 + \frac12 \cot\frac x2 \sin 2nx - \frac12 \cos 2nx.   \label{cos_identity_2n}
\end{eqnarray}
Subtracting Eq. (\ref{cos_identity_2n_2x}) from Eq. (\ref{cos_identity_2n}) implies
\[
\sum_{j=0}^{n-1} \cos((2j+1)x) =\frac12 (\cot\frac x2 -\cot x) \sin 2nx.
\]
Plugging $x=\frac{2\pi k}{2n}$, we obtain 
\begin{eqnarray}\label{trig_odd_sum}
	\sum_{j=0}^{n-1} cos\frac{2\pi (2j+1) k}{2n}=\left\{\begin{array}{cc}
		0 & \mbox{if $k\neq 0  mod(n)$,}  \\
		n & \mbox{ if $k/n$ is even,} \\
		-n & \mbox{if $k/n$ is even.} 
	\end{array}\right.
\end{eqnarray}
Apply derivative on both sides of (\ref{cos_identity}), we obtain
\begin{eqnarray*}
	\sum_{j=0}^{n-1} j \sin jx &=&\frac14 \sin nx \csc^2\frac x2 -\frac n2 \sin nx -\frac n2 \cos nx \cot \frac x2.
\end{eqnarray*}
Plugging $x=\frac{2\pi k}{2n}$ for $0<k<2n$ to above equations, we have 
\begin{eqnarray}
	\sum_{j=0}^{n-1} j \sin\frac{2\pi j k}{2n} &=& (-1)^{k+1}\frac n2 \cot\frac{\pi k}{2n} \label{key_idenity}.
\end{eqnarray}
\subsection{The proof of Theorem \ref{main_thm}}\label{subsec:proof_main_thm}
Following observation is the key in development of Theorem \ref{main_thm}, whose proof can be found in Appendix \ref{pf_keylemma}.  
\begin{Lem}\label{keylemma} Adapt the notations in Section \ref{sec:main}.
	\begin{itemize} 	
		\item If $f(x)$ is even, define
		\begin{eqnarray}
			\tilde{f}_M(x) &=& \sum_{j=0}^{M-1}\tilde{A}_j \cos\frac{j\pi x}b,  \label{fhat_even}  \nonumber\\
			\tilde{A}_0 &:=&  \frac{2}N\sum_{k=0}^{N-1} y_k,  \qquad  \tilde{A}_j = a_j, \quad 1\le j <M. \nonumber 
		\end{eqnarray}	
		Let $\tilde{y}_l=\tilde{f}_M(x_l)$ for $0\le l < N$. Then 
		\begin{eqnarray}
			\tilde{y}_{2k} - y_{2k} &=&\frac1M\sum^{M-1}_{j=0}y_{2j+1},  \quad 0\le k < M,  \label{eq_keylemma_a_even}\\
			\tilde{y}_{2k+1}-y_{2k+1}&=&\frac1M\sum^{M-1}_{j=0}y_{2j},  \quad 0\le k < M. \label{eq_keylemma_a_odd}
		\end{eqnarray}
		\item If $f(x)$ is odd, $\tilde{f}_M$ fits to all nodes, i.e. 
		\begin{equation}\label{eq_keylemma_b}
			\tilde{y}_k - y_{k} = 0, \quad 0\le k < N. 
		\end{equation}
	\end{itemize}
\end{Lem}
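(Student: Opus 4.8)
The plan is to evaluate $\tilde f_M$ directly at the grid nodes and reduce everything to a single discrete kernel sum. First I would record the elementary fact that, since $x_l=-b+l\lambda$ with $\lambda=2b/N$, one has $\frac{j\pi x_l}{b}=-j\pi+\frac{2\pi jl}{N}$, hence
\[
\cos\frac{j\pi x_l}{b}=(-1)^j\cos\frac{2\pi jl}{N},\qquad \sin\frac{j\pi x_l}{b}=(-1)^j\sin\frac{2\pi jl}{N}.
\]
The factor $(-1)^j$ here cancels against the $(-1)^j$ built into the coefficient formulas (\ref{aj_even}). I would also observe that $\tilde A_0=\frac2N\sum_k y_k$ is exactly the $j=0$ instance of $a_j=\frac2N\sum_k(-1)^j y_k\cos\frac{2\pi jk}N$, so a single uniform formula covers all $0\le j<M$; this is what keeps the subsequent summation clean. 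Note that this lemma is purely algebraic, so no smoothness hypothesis on $f$ is needed beyond the even/odd symmetry.

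Second, I substitute the coefficient formula and interchange the order of summation, writing $\tilde y_l=\tilde f_M(x_l)=\frac2N\sum_{k=0}^{N-1}y_k\,K_l(k)$, where $K_l(k)=\sum_{j=0}^{M-1}\cos\frac{2\pi jk}N\cos\frac{2\pi jl}N$ in the even case and the corresponding product of sines in the odd case. The product-to-sum identities convert each $K_l(k)$ into $\frac12\big(S(k-l)\pm S(k+l)\big)$, where $S(m):=\sum_{j=0}^{M-1}\cos\frac{2\pi jm}{N}$, with the $+$ sign in the even (cosine) case and the $-$ sign in the odd (sine) case. Everything then reduces to the half-range kernel value $S(m)$, which is precisely the three-way evaluation (\ref{identity_cos_sum}) (taking $n=M$, $2n=N$): $S(m)=M$ if $m\equiv0\ (\mathrm{mod}\ N)$, $S(m)=1$ if $m$ is odd, and $S(m)=0$ otherwise.

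Third, I collect the contributions. In the even case one finds $\sum_k y_k S(k-l)=My_l+\sum_{k\not\equiv l\,(\mathrm{mod}\,2)}y_k$ and $\sum_k y_k S(k+l)=My_{(N-l)\bmod N}+\sum_{k\not\equiv l\,(\mathrm{mod}\,2)}y_k$, so that
\[
\tilde y_l=\frac12 y_l+\frac12 y_{(N-l)\bmod N}+\frac1M\sum_{k\not\equiv l\,(\mathrm{mod}\,2)}y_k .
\]
The evenness symmetry $y_j=y_{N-j}$ from (\ref{f_N_interpolation_new}) merges the two diagonal terms into $y_l$, leaving $\tilde y_l-y_l=\frac1M\sum_{k\not\equiv l\,(\mathrm{mod}\,2)}y_k$; splitting on the parity of $l$ then yields (\ref{eq_keylemma_a_even}) and (\ref{eq_keylemma_a_odd}). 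In the odd case the two off-diagonal sums enter with opposite signs and cancel, leaving $\tilde y_l=\frac12\big(y_l-y_{(N-l)\bmod N}\big)$; the oddness relation $y_j=-y_{N-j}$ (together with $y_0=0$) then gives $\tilde y_l=y_l$, which is (\ref{eq_keylemma_b}).

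The step demanding the most care is the bookkeeping around $S(m)$. Unlike the full orthogonality relation, the half-range sum returns $1$ rather than $0$ on odd $m$, and this residual $1$ is exactly what persists in the even case to produce the uniform shift $\epsilon_M$ of (\ref{eps_M}), whereas it cancels in the odd case. I would therefore take pains to isolate the diagonal indices $k=l$ and $k=(N-l)\bmod N$ (which lie in the $S=M$ branch and share the parity of $l$, so are not double-counted against the opposite-parity sum) from the odd-difference indices, and to treat $l=0$ separately so that $(N-l)\bmod N$ is read correctly.
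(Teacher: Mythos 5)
Your proposal is correct and follows essentially the same route as the paper's proof in Appendix A: evaluate $\tilde f_M$ at the nodes, substitute the coefficient formulas, interchange the order of summation, and reduce to discrete half-range trigonometric sums evaluated via (\ref{identity_cos_sum}), invoking the symmetry $y_j=\pm y_{N-j}$ at the end. The only difference is organizational — you classify a single kernel $S(k\pm l)$ by residue and parity, whereas the paper first splits the node sum by parity into the pieces $I_e,\dots,IV_e$ — and your bookkeeping (no double counting of the diagonal terms, the $l=0$ case) checks out.
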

With Eq (\ref{eq_keylemma_a_even}), (\ref{eq_keylemma_a_odd}) and (\ref{eq_keylemma_b}),  we are ready to prove Theorem \ref{main_thm}. 

\begin{proof} 
	\begin{enumerate}
		\item  Let $f(x)$ be even.  By definition,
		\begin{equation}\label{aA0}
			\tilde{A}_0-a_0 = \frac1M\sum^{M-1}_{j=0}y_{2j+1},
		\end{equation}
		and therefore by (\ref{eq_keylemma_a_even}) and (\ref{aA0}), we have
		\[
		f_M(x_{2k}) - y_{2k} = \tilde{f}_M(x_{2k}) +a_0-\tilde{A}_0 -  y_{2k}= \tilde{y}_{2k} -\frac1M\sum^{M-1}_{j=0}y_{2j+1} -y_{2k} = 0.
		\]
		Similarly, by (\ref{eq_keylemma_a_odd}) and (\ref{aA0}), we have
		\[
		f_M(x_{2k+1}) - y_{2k+1} = \tilde{y}_{2k+1}-\tilde{A}_0+a_0-y_{2k+1}= \frac{2}N\sum_{j=0}^{N-1}(-1)^jy_{j}.
		\]
		To show uniqueness, assume that Eq (\ref{error_even_new}) and (\ref{error_odd_new}) hold, which implies 	for $0\le k<M$
		\begin{eqnarray}
			y_{2k}&=&\sum_{j=0}^{M-1}a_j (-1)^j \cos\frac{2\pi j (2k)}N  , \label{uniquness_even}\\
			(y_{2k+1} +\epsilon_M)  &=&\sum_{j=0}^{M-1}a_j (-1)^j \cos\frac{2\pi j (2k+1)}N. \label{uniquness_odd}
		\end{eqnarray}   
		Take summation on both sides of Eq (\ref{uniquness_even}) over $k$,  we obtain 	
		\[
		a_0=\frac1M\sum_{j=0}^{M-1}y_{2j}.
		\]
		For $0<l<M$,  Eq (\ref{uniquness_even})-(\ref{uniquness_odd}) imply 
		\begin{eqnarray*}
			y_{2k} \cos\frac{2\pi(2k)l}N  &=&\sum_{j=0}^{M-1}a_j (-1)^j \cos\frac{2\pi j (2k)}N  \cos\frac{2\pi(2k)l}N , \\
			(y_{2k+1} +\epsilon_M) \cos\frac{2\pi(2k+1)l}N &=&\sum_{j=0}^{M-1}a_j (-1)^j \cos\frac{2\pi j (2k+1)}N  \cos\frac{2\pi(2k+1)l}N.
		\end{eqnarray*}  
		Note that for fixed  $0< l<M$,
		\[
		\epsilon_M \sum_{k=0}^{M-1}\cos\frac{2\pi(2k+1)l}N =\epsilon_M\sum_{k=0}^{N-1}\cos\frac{2\pi kl}N -\epsilon_M\sum_{k=0}^{M-1}\cos\frac{2\pi (2k) l}N = 0.
		\]
		Hence
		\begin{eqnarray*}
			\sum_{k=0}^{N-1} y_k \cos\frac{2\pi k l}N &=& \sum_{j=0}^{M-1}\sum_{k=0}^{N-1}a_j (-1)^j \cos\frac{2\pi j k}N  \cos\frac{2\pi kl}N\\
			&=& \frac12 \sum_{j=0}^{M-1}\sum_{k=0}^{N-1}a_j (-1)^j (\cos\frac{2\pi k (j+l) }N +  \cos\frac{2\pi k (j-l)}N)\\
			&=& \frac N 2\sum_{j=0}^{M-1}\delta_{j,l}a_j(-1)^j  = a_l(-1)^l \frac N 2 
		\end{eqnarray*}
		which implies $a_l=\tilde{A}_l $ ($0<l<M$) as required. 
		
		\item Similarly, one can prove Theorem \ref{main_thm} in case that $f(x)$ is odd. 
	\end{enumerate}
\end{proof}
\subsection{The proof of Theorem \ref{converence_coef}}\label{subsec:proof_converence_coef}
We aim to prove Theorem \ref{converence_coef} in this subsection.  Classic Abel transform plays a similar tool as integration by part to derive estimations in discrete case.  Let us start to estimate following quantities for given positive integer pair $(l,k)$ with $l \le k$, 
\begin{eqnarray*}
	\phi_{l,k}&:=&\sum_{m=0}^{N-1} \Delta_{k} y_{m-k} \cos\frac{2\pi ml}N = \sum_{m=0}^{N-1} \Delta^{\lambda}_{k}[f] (x_{m-k})  \cos\frac{2\pi ml}N,\\
	\psi_{l,k}&:=&\sum_{m=0}^{N-1} \Delta_{k} y_{m-k} \sin\frac{2\pi ml}N = \sum_{m=0}^{N-1} \Delta^{\lambda}_{k}[f] (x_{m-k})  \sin\frac{2\pi ml}N.
\end{eqnarray*}
By Eq. (\ref{sum_delta_ai}),
\[
\phi_{l,k} = 2\sum_{m=0}^{N-1} \Delta^{\lambda}_{k}[f] (x_{m-k})   \cos^2\frac{\pi ml}n.
\]
Let $\Phi=\max_{x\in [-b,b]} \Delta^{\lambda}_{k}[f](x)$, we obtain
\[
\phi_{l,k} \le 2\Phi \sum_{m=0}^{N-1} \cos^2\frac{\pi ml}N = \Phi \sum_{m=0}^{N-1}(\cos\frac{2\pi ml}N +1)= \Phi N.
\]
Similarly,  we have $\phi_{l,k} \ge \phi N$ with $\phi=\min_{x\in [-b,b]} \Delta^{\lambda}_{k}[f](x)$. Applying same argument to $\psi_{l,k}$,  we conclude that there exist $\xi_{l,k}, \theta_{l,k} \in [-b, b]$ such that
\begin{equation}\label{phi_psi}
	\phi_{l,k} =  \Delta^{\lambda}_{k}[f](\xi_{l,k})N,\quad \psi_{l,k} = \Delta^{\lambda}_{k}[f](\theta_{l,k})N .
\end{equation}
We are now ready to prove Theorem \ref{converence_coef}.

\begin{proof} 
	By Eq. (\ref{abel_special}), (\ref{sin_sum}) and (\ref{cos_identity}), we can estimate $a_l$ for $0<l< M$.  Note $\sum_{m=0}^{N-1}\Delta y_m=0$,  we have
	\begin{eqnarray}
		(-1)^l \frac N2  a_l &=&-\sum_{m=0}^{N-1}\Delta y_m \sum_{j=0}^m \cos\frac{2\pi jl}N \nonumber\\
		&=&\frac12\sum_{m=0}^{N-1}\Delta y_m  (\cos\frac{2\pi l(m+1)}N  -  \frac12\cot\frac{\pi l}N \sin\frac{2\pi l(m+1)}N) \nonumber\\
		&=&\frac12\sum_{m=0}^{N-1}\Delta y_{m-1} \cos\frac{2\pi l m}N -\frac12\cot\frac{\pi l}N \sum_{m=0}^{N-1}\Delta y_{m-1}\sin\frac{2\pi l m}N  \nonumber\\
		&=&\frac12 \phi_{l,1}-\frac12\cot\frac{\pi l}N \psi_{1,l} \label{al_1}  \nonumber
	\end{eqnarray} 
	Using Eq (\ref{abel_special}) $K$ more times, denote $w:=\cot\frac{\pi l}N$, we obtain
	\begin{eqnarray*}
		(-1)^l N2^{K}  a_l &=& \phi_{l,K+1} - C_{K+1}^1 w \psi_{l,K+1} -  C_{K+1}^2 w^2  \phi_{l,K+1}\\
		&+& C_{K+1}^3 w^3 \psi_{l,{K+1}} + C_{K+1}^4 w^4 \phi_{l,{K+1}} + \cdots\\
		&=& I_{\phi} - I_{\psi},
	\end{eqnarray*}
	where
	\begin{eqnarray*}
		I_{\phi} &=& \phi_{l,{K+1}} (1- C^2_{K+1} w^2 + C^4_{K+1} w^4 + \dots)\\
		&=& \frac {\phi_{l,{K+1}}}2 ((1+iw)^{K+1} +(1-iw)^{K+1}) = \frac{  \cos(\frac{\pi } 2 - \frac{\pi l}{N})(K+1)}{\sin^{K+1}\frac{\pi l}{N}} \phi_{l,{K+1}},
	\end{eqnarray*}
	and
	\begin{eqnarray*}
		I_{\psi} &=& \psi_{l,{K+1}} (C^1_{K+1} w- C^3_{K+1} w^3 + C^5_{K+1} w^5 + \dots)\\
		&=& \frac {\psi_{l,{K+1}}}{2i} ((1+iw)^{K+1} - (1-iw)^{K+1}) = \frac{  \sin(\frac{\pi } 2 - \frac{\pi l}{N})(K+1)}{\sin^{K+1}\frac{\pi l}{N}} \psi_{l,{K+1}}.
	\end{eqnarray*}
	By (\ref{phi_psi}) there exist $\xi_{l,{K+1}}, \theta_{l,{K+1}} \in [-b, b]$ such that
	\begin{eqnarray*}\label{a_l}
		a_l &=&  \frac{(-1)^l }{2^{K}\sin^{K+1}\frac{\pi l}{N}} (  \Delta^{\lambda}_{{K+1}}[f](\xi_{l,{K+1}}) \cos(\frac{\pi } 2 
		- \frac{\pi l}{N})(K+1)  \\
		&-& \Delta^{\lambda}_{{K+1}}[f](\theta_{l,{K+1}}) \sin(\frac{\pi }2-\frac{\pi l}{N})(K+1) ).
	\end{eqnarray*}
	Plugging  (\ref{deltakyj}) to above equation, we obtain
	\begin{equation}\label{C_{K+1}_1}
		|a_l|  \le \frac{C(D_{K+1})}{N^{K+1}\sin^{K+1}\frac{\pi l}{N}},
	\end{equation}
	where $ C_{{K+1},1}$ is a bounded constant depending on $D_{K+1}$. 
\end{proof}

\subsection{The proof of Theorem \ref{conv_k_new}}\label{subsec:proof_conv_k_new}
This section is mainly used to prove Theorem \ref{conv_k_new} with $f$ is even.  Same argument can be applied in parallel if $f$ is odd. 

We first develop connection between interpolant $f_M(x)$ and $f_{2M}$, which are based on $2M$ and $4M$ nodes by Eq. (\ref{x_grid_N}) respectively. In the case of $N=2M$,  define  
\begin{eqnarray}
	(-1)^l\bar{A}^N_l  &=& \frac{1}{N}\sum^{N-1}_{j=0}y_j \cos \frac{2\pi jl}N, \quad 0\le l < N. \label{barA_l}
\end{eqnarray}
Note that $\bar{A}^N_l$ are symmetric in the sense
\begin{equation}\label{sym_A_B}
	\bar{A}^N_l = \bar{A}^N_{N-l},  \qquad l= 1, \dots, N-1.
\end{equation}
Similarly, $\{a_j\}_{0\le j <M}$  in Eq. (\ref{a0})-(\ref{aj_even}) will be denoted by $\{a_j^N\}_{0\le j <M}$. Recall
\[
a_j^N = 2\bar A^N_j, \qquad 1\le j < M=N/2.
\]
By this convention,  $\{\bar{A}^{2N}_j\}_{0\le j <2N}$ and $\{a^{2N}_j\}_{0\le j <N}$ denote associated quantities with $2N$ equispaced nodes, i.e. $\lambda=\frac{2b}{2N}$.  Following lemma is the key observation for convergence analysis in this section.
\begin{Lem}\label{lemma-key-pattern} Let $a_j^N, a_j^{2N}$ be the coefficients of $f_M(x)$ and $f_{2M}(x)$ respectively, then
	\begin{equation}\label{a_N_2N}
		a_j^N = a_j^{2N} + a^{2N}_{N-j}, \qquad  1\le j < M.
	\end{equation}	
\end{Lem}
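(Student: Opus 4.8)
The plan is to reduce the claim to an identity among the symmetric coefficients $\bar A_l$ and then exploit the fact that the coarse grid of $2M=N$ nodes sits inside the fine grid of $4M=2N$ nodes. Since $a_j^N=2\bar A^N_j$ for $1\le j<M$ and $a_j^{2N}=2\bar A^{2N}_j$ for $1\le j<N$, and since $N-j$ lies in $(N/2,N)$ whenever $1\le j<M$ (so that $a^{2N}_{N-j}=2\bar A^{2N}_{N-j}$ is the valid formula there), proving (\ref{a_N_2N}) is equivalent to establishing
\[
\bar A^N_j=\bar A^{2N}_j+\bar A^{2N}_{N-j},\qquad 1\le j<M.
\]
First I would record the geometric relation between the two grids. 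Writing the $2N$ fine nodes as $x'_k=-b+k\frac{2b}{2N}$ with values $y'_k=f(x'_k)$, one checks $x_k=x'_{2k}$ and hence $y_k=y'_{2k}$: the coarse samples are precisely the even-indexed fine samples.

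Next I would expand the right-hand side directly from the definition (\ref{barA_l}) applied on the fine grid. The two arithmetic inputs are that $N=2M$ is even, so $(-1)^{N-j}=(-1)^j$, and the folding identity
\[
\cos\frac{\pi m(N-j)}{N}=\cos\Big(\pi m-\frac{\pi mj}{N}\Big)=(-1)^m\cos\frac{\pi mj}{N},
\]
valid for every integer $m$ because $\sin\pi m=0$. Multiplying the defining relations for $l=j$ and $l=N-j$ by the appropriate signs and adding them then gives
\[
(-1)^j\big(\bar A^{2N}_j+\bar A^{2N}_{N-j}\big)=\frac1{2N}\sum_{m=0}^{2N-1}y'_m\big(1+(-1)^m\big)\cos\frac{\pi mj}{N}.
\]

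The decisive step is the parity cancellation: the factor $1+(-1)^m$ vanishes for odd $m$ and equals $2$ for even $m=2k$. Retaining only the surviving even terms and invoking $y'_{2k}=y_k$ collapses the fine-grid sum back onto the coarse grid,
\[
(-1)^j\big(\bar A^{2N}_j+\bar A^{2N}_{N-j}\big)=\frac1N\sum_{k=0}^{N-1}y_k\cos\frac{2\pi kj}{N}=(-1)^j\bar A^N_j,
\]
which is exactly the identity sought; multiplying by $2$ recovers (\ref{a_N_2N}). I expect the main difficulty to be bookkeeping rather than conceptual: one must verify that $N-j$ always lands in the index range $1\le N-j<N$ on which $a^{2N}_{N-j}=2\bar A^{2N}_{N-j}$ holds, and keep the sign conventions $(-1)^j$ and $(-1)^{N-j}$ consistent throughout the manipulation. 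Conceptually, the entire lemma is the classical aliasing relation between a discrete cosine transform on $2N$ points and its subsampling to $N$ points, and the folding identity together with the $1+(-1)^m$ filter are the algebraic fingerprints of that subsampling.
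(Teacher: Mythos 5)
Your proof is correct. It establishes exactly the reduction the paper uses, namely the identity $\bar A^N_j=\bar A^{2N}_j+\bar A^{2N}_{N-j}$ together with the observation that $a^{2N}_{N-j}=2\bar A^{2N}_{N-j}$ is legitimate because $N-j$ stays in the range $1\le N-j<N$, but the route to that identity is genuinely different from the paper's. The paper splits the fine-grid sum defining $\bar A^{2N}_j$ into its even-node and odd-node contributions, identifies the even-node part as $\tfrac12(-1)^j\bar A^N_j$, and then evaluates the odd-node remainder $I_j$ by re-expanding $y_{2s+1}$ through the inverse transform $y_m=\sum_l(-1)^l\bar A^{2N}_l\cos\frac{2\pi ml}{2N}$ and invoking the orthogonality-type identity (\ref{trig_odd_sum}); this yields $I_j=\tfrac12(-1)^j(\bar A^{2N}_j-\bar A^{2N}_{N-j})$ and hence the claim. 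You instead pair the two coefficients $\bar A^{2N}_j$ and $\bar A^{2N}_{N-j}$ directly, use the folding identity $\cos\frac{\pi m(N-j)}{N}=(-1)^m\cos\frac{\pi mj}{N}$ together with $(-1)^{N-j}=(-1)^j$ (valid since $N$ is even), and let the parity filter $1+(-1)^m$ annihilate the odd-indexed fine nodes, collapsing the sum onto the coarse grid. Your argument is shorter and more elementary: it needs no inverse-transform representation of the samples and no trigonometric sum identities beyond the angle-addition formula, and it makes the aliasing interpretation of the lemma transparent. The paper's computation of $I_j$ is more work for this particular statement, but it isolates the odd-node contribution as a quantity of independent interest, which is in the spirit of how the surrounding section manipulates such sums.
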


\begin{proof} Let $\bar{A}_j^N, \bar{A}_j^{2N}$ be defined by Eq. (\ref{barA_l}). Eq (\ref{a_N_2N}) is equivalent to
	\begin{equation}\label{rel_N_2N}
		\bar{A}_j^N = \bar{A}_j^{2N} + \bar A^{2N}_{N-j}, \qquad 0\le j< N.
	\end{equation}
	For $0\le j <N$, 
	\begin{eqnarray}
		(-1)^j\bar{A}_j^{2N}&=& \frac{1}{2N}\sum^{N-1}_{s=0}y_{2s} \cos \frac{2\pi s j}N + \frac{1}{2N}\sum^{N-1}_{s=0}y_{2s+1} \cos \frac{2\pi (2s+1)j}{2N} \nonumber\\
		&=& (-1)^j \frac12 \bar{A}_j^{N} + I_j \label{I_N_2N} ,
	\end{eqnarray}	 
	where $I_j :=\frac{1}{2N}\sum^{N-1}_{s=0}y_{2s+1} \cos \frac{2\pi (2s+1)j}{2N}$.  By Eq. (\ref{trig_odd_sum}) and (\ref{sym_A_B}), 
	\begin{eqnarray}
		I_j &=& \frac{1}{2N}\sum^{N-1}_{s=0} \cos \frac{2\pi (2s+1)j}{2N} \sum_{l=0}^{2N-1} (-1)^l A^{2N}_l \cos\frac{2\pi (2s+1)l}{2N}  \nonumber \\
		&=& \frac{1}{4N}\sum^{2N-1}_{l=0} (-1)^l A^{2N}_{l} \sum^{N-1}_{s=0}(\cos\frac{2\pi (2s+1)(l+j)}{2N} + \cos\frac{2\pi (2s+1)(l-j)}{2N} ) \nonumber\\
		&=& \frac{1}{4}\sum^{2N-1}_{l=0} (-1)^l A^{2N}_{l} (-\delta_{l+j=N} + \delta_{l+j=2N}  + \delta_{l-j=0} - \delta_{l-j=N}) \nonumber\\
		&=& \frac 12 (-1)^j (A^{2N}_j - A^{2N}_{N-j}), \nonumber
	\end{eqnarray}
	which, together with Eq (\ref{I_N_2N}), implies Eq. (\ref{rel_N_2N}).
\end{proof}

Let $f_{M}(x)$ be the interpolant using $N=2M$ nodes, define $\Delta_M(x)=f_M(x)-f_{2M}(x)$,  we have
\begin{equation}\label{delta_m_def}
	\Delta_M(x) = a_0^{N}-a_0^{2N} - a_M^{2N}\cos\frac{\pi M x}b + \sum_{M<j<N}a_j^{2N}(\cos \frac{(N-j)\pi x}{b}-\cos\frac{j\pi x}{b}).
\end{equation}
Notice $|a^{2N}_j|\le C(D_{K+1})/N^{K+1}$ for $M\le j<N$ and by Eq. (\ref{Euler-Maclaurin}), 
\[
|a_0^N - a_0^{N_p}| \le  |a_0^N-\frac {A_0}2| + |a_0^{N_p}-\frac {A_0}2| \le \frac{C(D_{K+1})}{N^K},
\]
hence
\[
|\Delta_N(x) | \le \frac{C(D_{K+1})}{N^{K}}.
\]
For a given $N=2^q$ and an integer $p\ge 0$, define $M_p=2^{p-1}N$, and  $f_{M_p}$ be the associated interpolant with $2M_p$ nodes and $\Delta_{M_p}(x)=f_{M_p}-f_{2M_p}$, we have
\begin{eqnarray}
	|f_M(x) - f_{M_p}(x)| &\le& \sum_{0\le r \le p}|f_{M_r}(x) - f_{M_{r+1}}(x)|\nonumber\\ 
	&\le& \frac{C(D_{K+1})}{N^{K}}   \sum_{0\le r \le p} \frac{1}{2^{(r-1)K}} \le \frac{C(D_{K+1})}{N^{K}}. \label{detal_M}
\end{eqnarray}
For integer $1\le k < K$, by (\ref{delta_m_def}), we obtain estimation on $k-th$ derivative of $\Delta_M(x)$ ,
\begin{eqnarray}
	\Delta^{(k)}_M(x) &\le& (\frac{\pi M}b)^k |a_M^{2N}| + \sum_{M<j<N} |a_j^{2N}|[(\frac{(N-j)\pi}{b})^k +  (\frac{j\pi}{b})^k]\nonumber\\
	&\le& \frac{C({D_{K+1}})}{N^{K-k}} \label{der_Delta_M}, \nonumber  
\end{eqnarray}
which implies
\begin{eqnarray}
	|f^{(k)}_M(x) - f^{(k)}_{M_p}(x)| &\le& \sum_{0\le r \le p}|f^{(k)}_{M_r}(x) - f^{(k)}_{M_{r+1}}(x)|\nonumber\\ 
	&\le& \frac{C(D_{K+1})}{N^{K-k}}   \sum_{0\le r \le p} \frac{1}{2^{(r-1)K}} \le \frac{C(D_{K+1})}{N^{K-k}}. \label{detal_M_2}
\end{eqnarray}
Estimations  (\ref{detal_M}) and (\ref{detal_M_2}) imply that $f_{M_p}$ and $f^{(k)}_{M_p}$ converge uniformly as $p\to \infty$.  It is clear that $f_{M_p}$ converges to $f(x)$ on the dense set $S=\cup_{p=0}^{\infty} S_p$ where
\[
S_p=\{ -b + 2k\frac{2b}{2M_p}, \quad 0\le k < M_p.\} 
\]
Therefore, $f_{M_p}$ converges to $f(x)$ and consequently  $f^{(k)}_{M_p}$ converges to $f^{(k)}(x)$ as $p\to \infty$. Applying $p\to\infty$ to Estimation (\ref{detal_M}) and (\ref{detal_M_2}), we obtain Estimation (\ref{detal_M_f}) and (\ref{der_detal_M_f}).

\section{Trigonometric Estimation of General Functions}\label{sec:Trigonometric_Interpolation_of_General_Functions}
This section is used to develop a trigonometric interpolation algorithm that can be applied to a non-periodic function $f$ over a bounded interval $[s,e]$. As such,  $f(x)$ shall denote a function whose $K+1$-th derivative $f^{(K+1)}(x)$ exists and is bounded for some $K\ge 1$, but need not be periodic in this paper. 

We can shift $f(x)$ by $s$ and then evenly extend it to $[s-e, e-s]$. Such direct extension deteriorates the smoothness at $0, \pm (e-s)$, and leads to a poor convergence performance as showed in Section \ref{sec:Numerical_Performance}.  To seek for a smooth periodic extension, we assume that $f$ can be extended smoothly such that $f^{(K+1)}$ exists and is bounded over $[s-\delta, e+\delta]$ for certain $\delta>0$ and leverage a cut-off smooth function $h(x)$ with following property: 
\begin{eqnarray*}
	h(x)=\left\{\begin{array}{cc}
		1 & x\in [s,e], \\
		0 & \mbox{$x<s-\delta$ or $x>e+\delta$}. \\
	\end{array}\right.
\end{eqnarray*}
Such cut-off function can be constructed in different ways and we shall adopt one with a closed-form analytic expression as follows: 
\begin{equation}\label{cut-off-formula}
	h(x; r, s,e, \delta) = B(\frac{x-(s-\delta)}{\delta}, r)\times B(\frac{e+\delta-x}{\delta}, r), 
\end{equation}
where 
\[
B(x; r)=\frac{G(x; r)}{G(x; r) + G(1-x; r)}, \quad 
G(x; r)=\left\{\begin{array}{cc}
	e^{-\frac{r}{x^2}} & \mbox{$x>0$,}  \\
	0 & \mbox{$x\le 0$.} 
\end{array}\right.
\]
To see the effect of parameter $r$, Figure \ref{fig:cut_off_performance_5_5} plots left wing of cut-off functions with three scenarios $r\in \{ 0.1, 0.5, 1\}$ for $(s,e, \delta)=(-1,1, 1)$.  As $r$ increases from $0.1$ to $1$, $h$ takes more space in $x-axis$ to increase around $0$ to near $1$ and therefore change is less dramatically over the process, which is preferable.  On the other hand, smaller $r$ provides more spaces for $h(x)$ converges to $0$ and $1$, which is also preferable for the performance of $h$ around $s$ and $s-\delta$.  
\begin{figure}[H]\label{fig:cut_off_performance_5_5}
	\includegraphics[width=10cm]{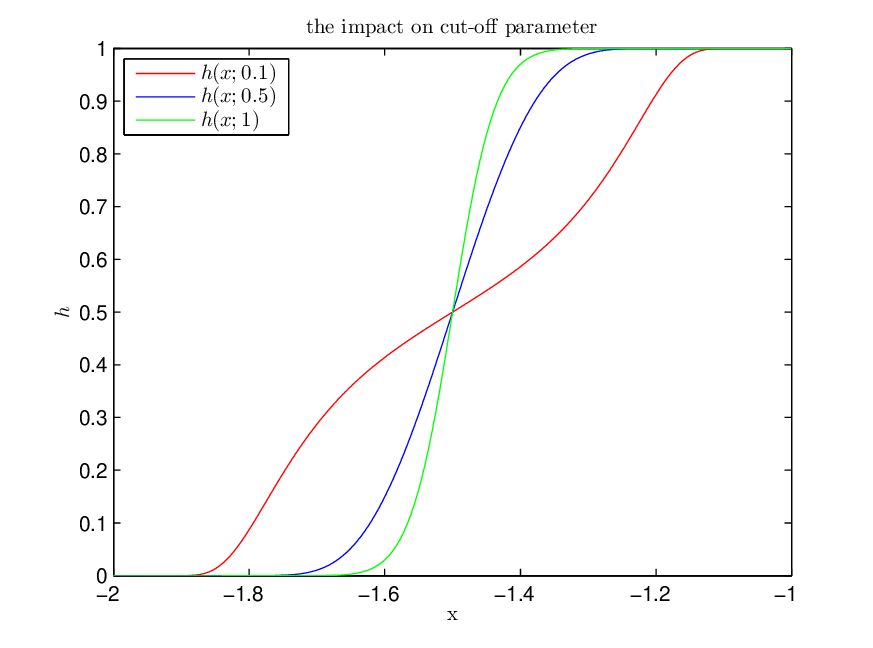}
	\caption{The graphs of $h_M$ over $[s-\delta, s]$ with $(s,e, \delta)=(-1,1, 1)$ and $M=2^8$ for three cases: $r=0.1$ (red),  $r=0.5$ (blue) and $r=1$ (green).  }
\end{figure}
Trigonometric expansion of $h$ can be useful in applications, and Table \ref{tab:impact_cutoff_parameter_p} shows max error in two cases with $M=2^q$ grid nodes, which recommends $r=0.5$ and we shall take it on all tests reported in this paper.  
\begin{table}[htbp]
	\caption{Max differences between $h_M$ and $h$ where $h_M$ is the trigonometric estimation of $h$ with $M=2^q$ grid nodes.}
	\begin{tabular}{rrr}
		\multicolumn{1}{l}{q} & \multicolumn{1}{l}{r} & \multicolumn{1}{l}{Max Error} \\ \hline\hline
		8     & 0.1   & 9.6E-07 \\ \hline
		8     & 0.5   & 1.5E-10 \\ \hline
		8     & 1     & 3.3E-08 \\ \hline
		10    & 0.1   & 3.4E-15 \\ \hline
		10    & 0.5   & 2.7E-15 \\ \hline
		10    & 1     & 2.9E-15 \\ \hline
	\end{tabular}%
	\label{tab:impact_cutoff_parameter_p}%
\end{table}%
With the cut-off $h(x)$, $h(x)f(x)$ can be smoothly extended to $[2s-e-3\delta, s-\delta]$ symmetrically with respect to vertical line $x=s-\delta$. The idea is demonstrated by an example where $f(x)=(x-2.5)^2$ for $x\in [2,3]$ with $(s,e,\delta)=2,3,1$, as shown in Figure \ref{fig:cut_off_extension_a}.
\begin{figure}[H]\label{fig:cut_off_extension_a}
	\includegraphics[width=10cm]{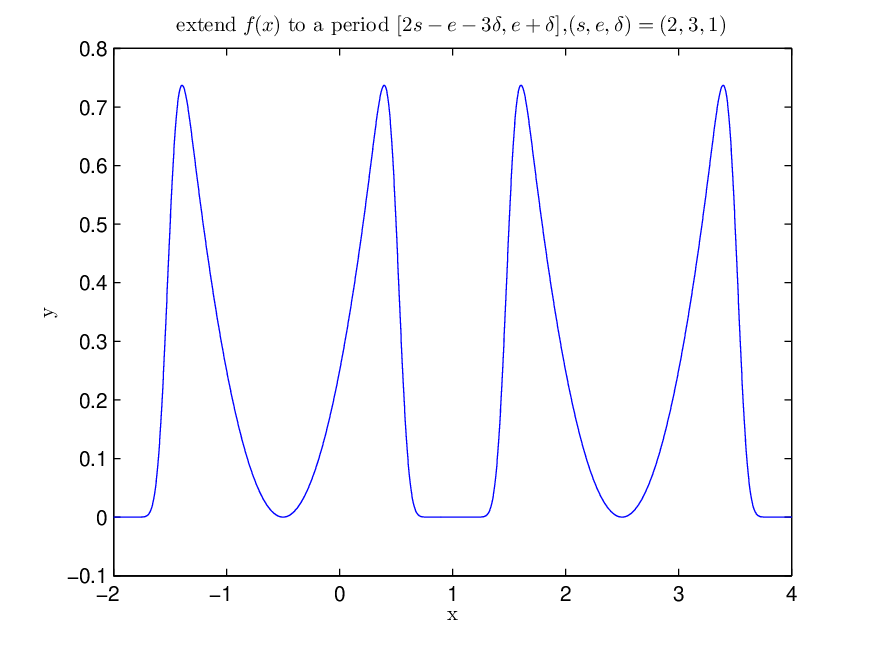}
	\caption{The graphs of the $fh$'s extension $(hf)_{ext}$ over a period $[2s-e-3\delta, e+\delta]$ with $(s,e,\delta)=(2,3,1)$.  The function is even after parallel shift left by $s-\delta=1$. Note that $fh=f=(x-2.5)^2$ over $[s,e]$ as expected.}
\end{figure}
The periodic extension can be summarized as follows:
\begin{Alg}\label{alg_textenion} 
	Let $f(x)$ be defined over $[s,e]$. 
	\begin{enumerate}
		\item Select integers $0<p<q$ such that $f(x)$ can be smoothly extended to $[s-\delta, e+\delta]$,  where
		\begin{eqnarray*}
			n&=&2^p, \quad  M=2^q,  \quad \lambda=\frac{e-s}n, \\	
			m&=&\frac{M-n}2, \qquad  \delta = m\lambda .
		\end{eqnarray*}
		\item Construct the cut-off function $h(x)$ with parameter $(s,e,\delta)$.
		\item Let
		\begin{equation}\label{ob}
			o=s-\delta, \quad b=e+\delta -o,
		\end{equation}
		and define $F(x):=h(x+o)f(x+o)$ for $x\in [0,b]$.
		\item Extend $F(x)$ evenly by $F(x)=F(-x)$ for $x\in [-b,0]$ \footnote{Alternatively, extend $F(x)$ oddly by $F(x)=-F(-x)$ for $x\in [-b,0]$ if odd trigonometric estimation is desired.}. It is clear that $F(x)$ can be treated as an periodic even function. 
		\item Define grid nodes by 
		\[
		x_j = -b + j\lambda, \quad j=0, 1, \cdots N-1, \quad N=2M,
		\]
		and apply them to construct trigonometric expansion $F_M$ by Theorem \ref{main_thm} \footnote{Alternatively, $F(x)= \sum_{1\le j < M} a_j \sin\frac{j\pi x}{b}$ if odd trigonometric interpolant is desired.} :
		\[
		F_M(x) = \sum_{0\le j < M} a_j \cos\frac{j\pi x}{b}.
		\]
		\item Let 
		\[
			\hat{f}_{M}(x)=F_M(x-o)=\sum_{0\le j < M} a_j \cos\frac{j\pi (x-o)}{b}. 
		\]
	\end{enumerate}
\end{Alg}
$\hat{f}$ will be used to denote the extended periodic function by Algorithm  \ref{alg_textenion} and $\hat{f}_M$ be the interpolant of $\hat{f}$ by Algorithm \ref{alg_textenion} in the rest of this paper.  Clearly, it inherits same smoothness as $f(x)$ does. We shall discuss its performance and applications in Section \ref{sec:Numerical_Performance} and  \ref{sec:Applications} respectively. 

Figure \ref{fig:cut_off_sample_2} compares $f=(x-2.5)^2$ and $\hat{f}_M$ over $[s-\delta, e+\delta]$ with $[s,e]=[2,3]$.  Note that $\hat{f}_M$ recovers $f$ over $[s,e]$ and approaches to $0$ around boundary points $s-\delta, e+\delta$ as expected. 
\begin{figure}[H]\label{fig:cut_off_sample_2}
	\centering
	\includegraphics[width=10cm]{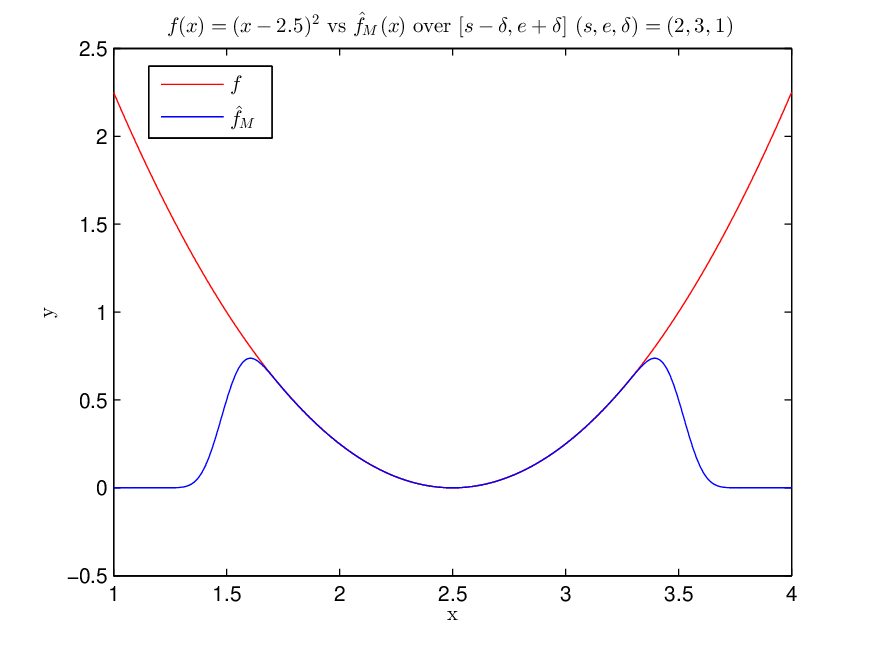}
	\caption{The graphs of $f(x)$ vs $\hat{f}_M(x)$ over $[e-\delta,s+\delta]$ with $(s,e, p, q, \delta)=(2,3,7,8,1)$. The figure is plotted by $2^{12}$ sample points.}
\end{figure}

\section{Numerical Performance}\label{sec:Numerical_Performance}
This section provides some numerical results to test  performance of $\hat f_M(x)$.  First, we test convergence performance of periodic function, and show that it is sensitive to smoothness of underlying function $f$ as expected.  Secondly,  we apply the enhanced algorithm to two sets of typical functions whose values can be highly oscillated and rapidly changed, and show that it does exhibit stable and accurate performance. 

\subsection{Numerical Performance on Periodic Functions}\label{subsec:numericalperformance_on_period_functions} Let $f$ be the even periodic function with period $2\pi$ and be defined as follows over $[-\pi, \pi]$:
\[
	f(x; d)|_{[-\pi, \pi]} = (1-(\frac x{\pi})^2)^d,
\]
where $d$ can be $1,2$.   It is clear that $f(x;1)$ is not differentiable at $\pm \pi$ and $f(x;2)$ is $2$-th continuous differentiable.  We expect that interpolant of $f_M(x;2)$ has significant better performance than $f_M(x;1)$. 

Table \ref{tab:max_diff_fun_der} provides max errors under various settings on grid points. It confirms that performance is sensitive to number of grid points and especially degree of $f$'s smoothness as expected. 
\begin{table}[H]
	\caption{The max errors $|f_M(x)-f(x)|$ and  $|f'_M(x)-f'(x)|$ }
	\begin{tabular}{rrcc}
		\hline\hline
		\multicolumn{1}{r}{$d$} & \multicolumn{1}{r}{$M$} & \multicolumn{1}{c}{$\max(|f-f_M|)$}   & \multicolumn{1}{c}{$\max (|f'-f'_M|)$}  \\
		\hline
		1     & 16    & 2.52E-02  & 6.4E-01 \\
		1     & 64    & 6.02E-03 &  6.4E-01 \\
		1     & 256   & 1.48E-03 &  6.4E-01  \\
		1     & 1024  & 3.55E-04 &  6.4E-01 \\\hline
		2     & 16    & 4.29E-05  & 5.7E-04 \\
		2     & 64    & 5.62E-07  & 3.5E-05 \\
		2     & 256   & 8.26E-09  & 2.2E-06 \\
		2     & 1024  & 1.28E-10 & 1.4E-07 \\\hline
		\hline
	\end{tabular}%
	\label{tab:max_diff_fun_der}%
\end{table}%

\subsection{Numerical Performance on General Functions}\label{subsec:numericalperformance_on_general_functions}
Let $\hat{f}$ be the smooth extension described in Section \ref{sec:Trigonometric_Interpolation_of_General_Functions} and $\hat{f}_M$ be the trigonometric estimation of $\hat{f}$ by Algorithm \ref{alg_textenion}.  

We apply Algorithm \ref{alg_textenion} to following functions to test convergence performance. 
\begin{eqnarray*}\label{test_fun}
	y&=& \cos\theta x, \quad \theta =1, 10, 100,\\
	y &=& x^n,\qquad n = 4,8,10.
\end{eqnarray*}
For any estimation $\tilde w(x)$ of a function $w(x)$,  define  max error in log space over a subset $S$ of the domain of $w$ by
\begin{equation}\label{error_metric}
	E(w)=\max_{x \in S}\{ log_{10} |\tilde{w}(x)-w(x)| \}.
\end{equation}

Let $f_M$ be the interpolant of direct periodic extension of $f$ without using cut-off function. Table \ref{tab:smooth_impact_cos} shows such errors up to second derivatives of both $f_M$ and $\hat{f}_M$ for the two functions with the parameters $(s,e, p,q, \delta) = (-1,1,7,8,1)$, and the subset $S=\{s+k\frac{e-s}{2^{12}}$, $0\le k \le 2^{12}\}$. Note that we deliberately make $S$ larger than the set of grid nodes used in Theorem \ref{main_thm} to test that $\hat{f}_M$ converges at non-grid nodes.  The performance of $\hat{f}_M$ is stable across all the test scenarios and max estimation errors are small, and $f_M(x)$ generates significant errors, especially on derivatives, which confirms the impact of smoothness on performance as expected. 
\begin{table}[H]
	\small
	\caption{Max error in log space (EL) with parameters $(s,e, p, q, \delta) = (-1,1,7,8,1)$.}
	\begin{tabular}{lrrrrrr}
		\multicolumn{1}{l}{$para$} & \multicolumn{1}{l}{$EL(f_M)$} & \multicolumn{1}{l}{$EL(\hat{f}_M)$} & \multicolumn{1}{l}{$EL(f'_M)$} & \multicolumn{1}{l}{$EL(\hat{f}_M')$} & \multicolumn{1}{l}{$EL(f''_M)$} & \multicolumn{1}{l}{$EL(\hat{f}_M'')$}  \\ \hline \hline
		\multicolumn{7}{c}{$f=\cos\theta x$} \\ \hline \hline
		$\theta=1$     & -3.2  & -14.7 & 0.0   & -13.1 & 2.8   & -10.7 \\ \hline 
		$\theta=10$    & -2.4  & -14.8 & -0.3  & -14.2 & 1.6   & -11.8 \\ \hline 
		$\theta=100$   & -1.4  & -14.0 & -0.3  & -14.0 & 0.6   & -11.9 \\ \hline 
		\multicolumn{7}{c}{$f=x^n$} \\ \hline \hline
		$n=4$     & 0.4   & -14.8 & 0.0   & -13.6 & 2.4   & -11.1 \\ \hline
		$n=8$     & 0.3   & -14.3 & 0.0   & -13.1 & 2.0   & -10.6 \\ \hline
		$n=10$    & 0.3   & -14.0 & 0.0   & -12.9 & 1.9   & -10.4 \\ \hline \hline
	\end{tabular}%
	\label{tab:smooth_impact_cos}%
\end{table}
\subsection{Discussion on Error behavior of $\hat f_M(x)$}\label{subsec:error_pattern}
The error analysis in Section \ref{sec:The_Development_of_the_Main_Theorems} is based upon two fundamental identities (\ref{sin_sum}) and (\ref{cos_identity}), which might imply a feature of a trigonometric polynomial, i.e. estimation error tends to be small due to certain cancellation when target function is smooth.  If further assuming that $f$ bears some symmetry (even or odd) and the set of grid points is selected by equispaced nodes,  then performance with high accuracy rate is expected since cancellation effect becomes more propounded as evidenced by subsequently derived identities (\ref{identity_cos_sin_sum}), (\ref{identity_cos_sum}), (\ref{identity_sin_sum}), (\ref{trig_odd_sum}), and (\ref{key_idenity}),  which play essential rules in the error analysis.  

Note that it is hard, if not impossible, to fully exploit this feature in theoretic analysis. We provide some supplementary numerical evidence in this section. We look into error behaviors of $\hat f_M(x)$.  Intuitively, as mentioned above,   the errors should exhibit ``local property", i.e. error at a point should not propagate and cause large compounding error at other points.  This is not the case when polynomial based method is used as shown in Section \ref{application_ode_general}.  We conduct relevant tests on four basic functions power, exponential, sin and cos function.  Figure \ref{fig:consecutive_diff} shows the normalized differences of consecutive errors defined by
$\frac 1{\max |f(x)|}\{\hat {f}_M(x_i) - f(x_i) - (\hat {f}_M(x_{i-1}) -f(x_{i-1}))\}.$
A clear sawtooth pattern is shown in Figure \ref{fig:consecutive_diff} for all test cases. Magnitude of error keep reasonable stable, but goes in alternative directions, a strong sign that error is not accumulating, but canceled with each other when variable $x$ moves around. Same phenomena has been observed on the estimation error about solution of non-linear ODE discussed in Remark \ref{rem:test_2_performance}.    As such,  performance of trigonometric interpolant $\hat {f}(x)$ is likely better than what is concluded in Section \ref{sec:main_results}.
\begin{figure}[H]\label{fig:consecutive_diff}
	\centering
	\includegraphics[width=5cm]{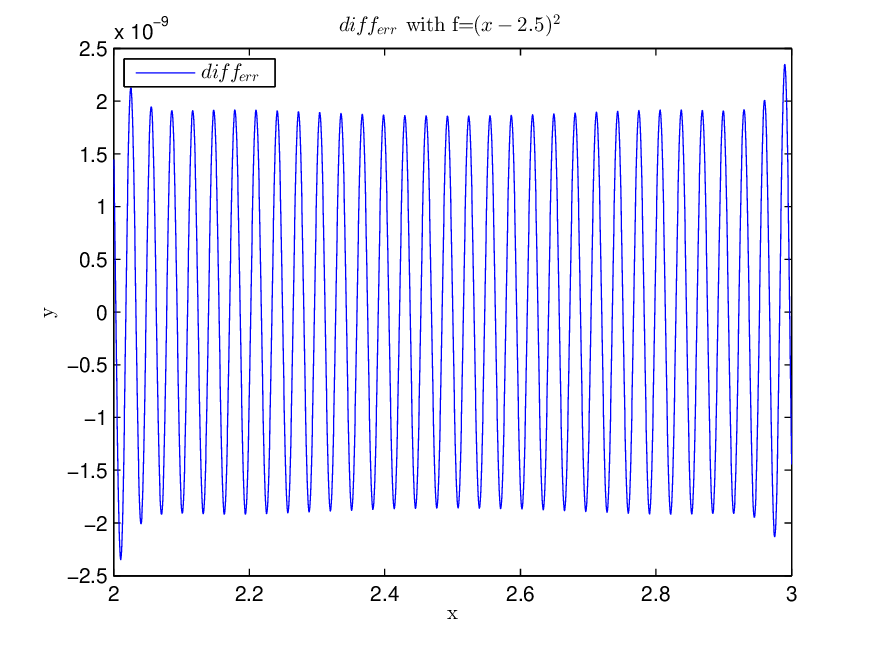}
	\includegraphics[width=5cm]{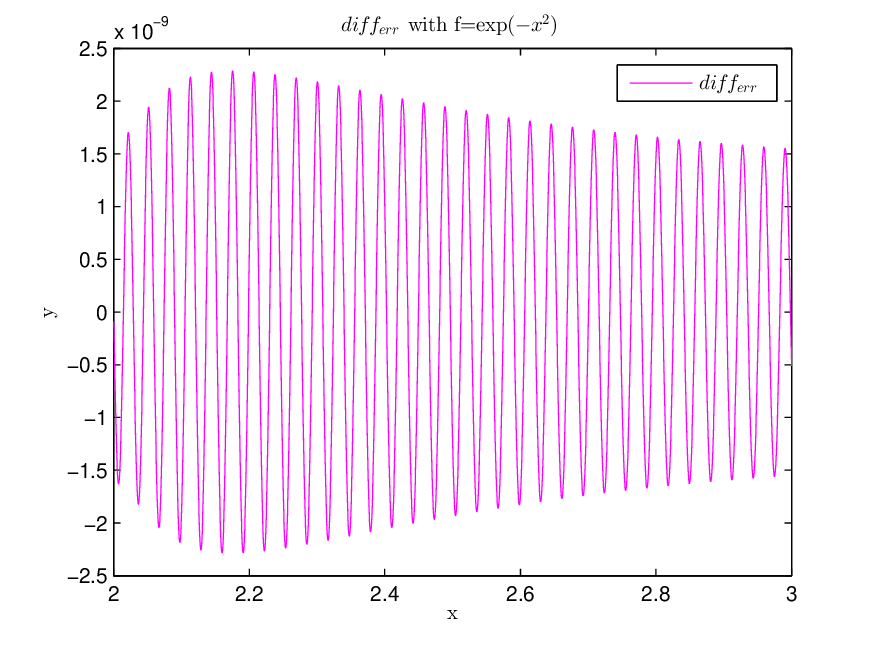}
	\includegraphics[width=5cm]{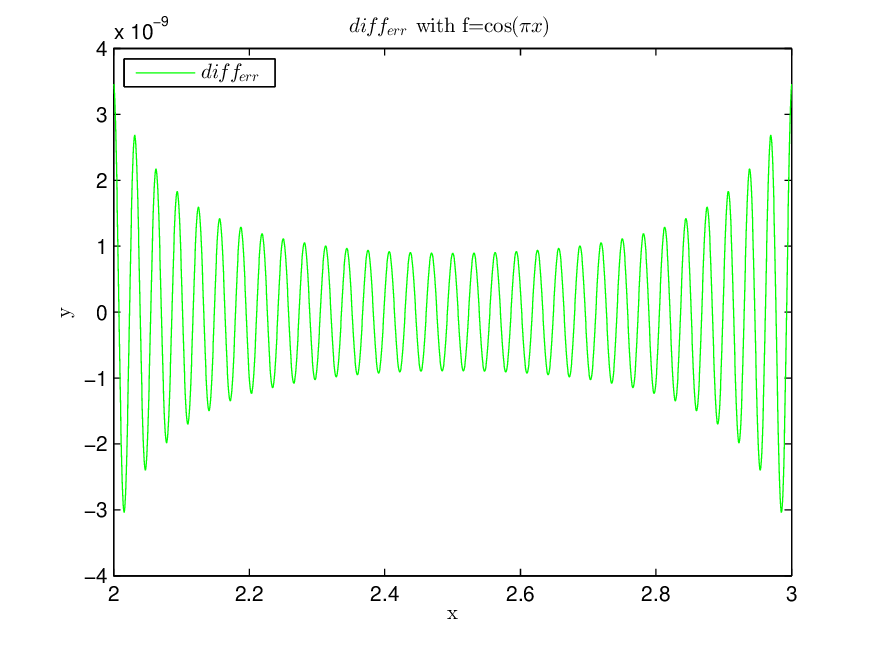}
	\includegraphics[width=5cm]{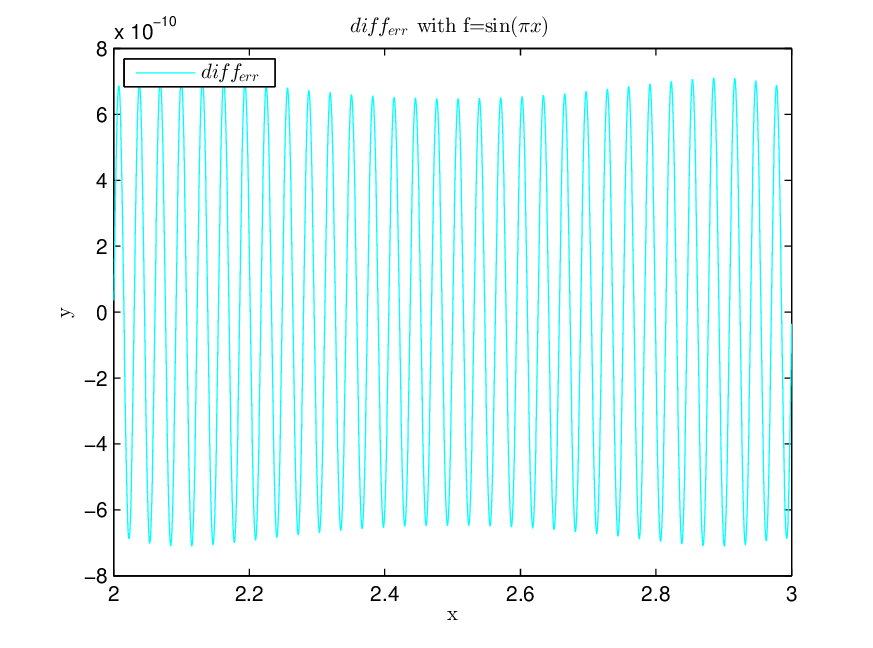}
	\caption{Plots of the normalized difference of consecutive error at grid points with $\lambda=1/2^6$ with $[s,e]=[2,3]$}
\end{figure}

\section{Applications}\label{sec:Applications}
The trigonometric estimation $\hat{f}_{M}(x)$ of a general function $f(x)$ by Algorithm  \ref{alg_textenion} can be used to develop numerical computational algorithms in various subjects that we shall discuss in subsequent papers. In this section,  we provide  two examples.  First,  $\hat{f}_M$ is used to solve an integral with a general integrand. The test results show that  the new method exhibits stable performance and outperforms that of popular algorithms such as Trapezoid and Simpson methods, especially when the integrands are highly oscillated. Furthermore, we provide an algorithm to solve first order linear ODE since the solution can always be represented as an integral.  Secondly,  we demonstrate how the algorithm can be effectively used in optimization by solving a general first order ODE.

The notations in Section \ref{sec:Trigonometric_Interpolation_of_General_Functions} will be adapted in this Section.

\subsection{Numerical Solution of a General Integral}
Let $f$ be a $K+1$ differentiable function over $[s-\delta,e+\delta]$ with $\delta>0$.  We aim to solve an integral  $\int^{e}_{s} f(u)du$.  Applying the approximation $\hat{f}$ described in Algorithm \ref{alg_textenion},
the integral can be estimated by
\begin{equation}\label{FM_cutoff}
	\int^{e}_{s} f(u)du  \approx a_0 (e-s) +  \sum_{1\le j <M} \frac{ba_j }{j\pi} (\sin\frac{j\pi (e-o)}{b} -\sin\frac{j\pi (s-o)}{b}),
\end{equation}
where $o, b$ are defined in Eq. (\ref{ob}). 

To see the performance, we conduct two sets of tests with integral $x^n$ and $\cos\theta x$
\[
\int^1_{-1} x^n dx, \qquad \int^1_{-1}\cos\theta x dx.
\] 
For each test, we use the metric defined in Eq. (\ref{error_metric}) to compare the performance of three methods: Estimation (\ref{FM_cutoff}), Trapezoid as well as Simpson, and display the results in Table \ref{tab:integration_power} and \ref{tab:integration_cos}.
\begin{table}[htbp]
\begin{tabular}{ccccc}
		\multicolumn{1}{l}{n} & \multicolumn{1}{l}{Trig. Estimation} & \multicolumn{1}{l}{Trapezoid Method} & \multicolumn{1}{l}{Simpson Method}  \\ \hline \hline
		4     & -15.5 & -5.0  & -10.2 \\ \hline
		8     & -14.3 & -4.7  & -9.1 \\ \hline
		10    & -14.3 & -4.6  & -8.7 \\ \hline
\end{tabular}%
\caption{The max errors in log space with three methods:  Estimation Eq (\ref{FM_cutoff}), Trapezoid, and Simpson. Note that $2^q$ denotes the number of grid points in interpolation algorithm with parameter $(s,e,p,q,\delta)=(-1,1,7,8,1)$.}
\label{tab:integration_power}%
\end{table}%
\begin{table}[htbp]
	\caption{Similar comparison as shown in Table \ref{tab:integration_power} for integral  $\cos \theta x$}
	\begin{tabular}{ccccc}
		\multicolumn{1}{l}{$\theta$} & \multicolumn{1}{l}{Approx. (\ref{FM_cutoff})} & \multicolumn{1}{l}{Trapezoid Method} & \multicolumn{1}{l}{Simpson Method} \\ \hline \hline
		1     & -15.4 & -5.7  & -11.7 \\ \hline
		10    & -16.4 & -4.9  & -8.9 \\ \hline
		100   & -16.8 & -3.9  & -5.9 \\ \hline
	\end{tabular}%
	\label{tab:integration_cos}%
\end{table}%
One can see that Simpson outperforms significantly Trapezoid as expected.  When $n$ and $\theta$ increases,  the performance of Simpson and Trapezoid deteriorates as expected since the integrals change more dramatically, especially for $\cos\theta x$. Estimation (\ref{FM_cutoff}) turns out to be more robust to handle rapid changes and high oscillation of function values as long as the integrand is sufficient smooth. 

Compared to numerical algorithms like Simpson and Trapezoid,  another advantage of Estimation (\ref{FM_cutoff}) is  that it provides an attractive closed-form approximation of anti derivatives.  As such,  it can be used to estimate
the solution of a general first order linear ordinary differential equation (ODE)
\begin{eqnarray}
	y'(x)+P(x)y&=&Q(x), \quad x\in [s, e] \label{ODE_1}\\
	y(x_0) &=& y_0, \label{ODE_2}
\end{eqnarray}
where $P(x), Q(x)$ are two functions over $[s, e]$ with first order continuous derivative. 
It is well-known that the solution can be expressed as 
\[
	y(x) = e^{-\int^x_{x_0}P(u)du }(y_0 + \int^x_{x_0} Q(u) e^{\int^u_{x_0}P(v)dv}du),
\]
which can be reformulated as 
\begin{eqnarray*}
	y(x) &=& \frac{y_0 + G(x)}{I(x)}, \\
	I(x)&=&e^{\int^x_{x_0}P(u)du}, \quad  G(x) = \int^x_{x_0} I(u)Q(u)du.
\end{eqnarray*}

We summarize the algorithm as following steps.  
\begin{Alg}\label{alg_ode} For a given ODE (\ref{ODE_1})-(\ref{ODE_2}), define $(p,q,\delta)$ as in Algorithm \ref{alg_textenion} such that $P,Q$ can be smoothly extended to $[s-\delta, e+\delta]$.   
	\begin{enumerate}
		\item Construct the cut-off function $h(x)$ with parameter $(s,e,\delta)$ by Eq. \ref{cut-off-formula}.
		\item Construct estimation $\hat{P}_M$ of $P(x)$ based on Algorithm \ref{alg_textenion} using grid nodes $\{x_k\}_{0\le k<2M}$
		\[
			{x_k} = o-b + k\lambda, \quad k=0, \dots, 2M-1,  \quad M=2^q,
		\]
		and obtain
		\begin{equation}\label{pM_cutoff}
			\hat{P}_{M}= \sum_{0\le j <M} a_j \cos\frac{j\pi (x-o)}{b}, \quad x\in [s,e],
		\end{equation}
		where $o, b$ are defined in Eq. (\ref{ob}).
		\item Estimate $I(x_k)$, denoted by $\tilde{I}(x_k)$, based on Eq. (\ref{pM_cutoff})  and  (\ref{FM_cutoff}).
		\item Construct $\hat{IQ}(x)_M$ with the grid points $\{(x_k \tilde{I}(x_k)Q(x_k))\}$  based on Algorithm \ref{alg_textenion}.
		\item Use $\hat{IQ}(x)_M$ to estimate $G(x)$ by Eq. (\ref{FM_cutoff}).
		\item Apply estimated  $G(x)$ and $I(x)$ to construct grid values for $y$.
		\item Apply Algorithm \ref{alg_textenion} to estimate the solution $y$.
	\end{enumerate}
\end{Alg}
\begin{Rem}
	\begin{enumerate}
		\item Eq (\ref{FM_cutoff}) can be carried out by FFT at grid points as in Theorem \ref{main_thm}, making Algorithm \ref{alg_ode} computationally efficient. 
		\item $I(x)$ can be too large to be handed properly when $P$ keep positive and $x$ is significantly away from $x_0$.  In this case,  one should break the range into small pieces and find the solutions on each of them separately. 
	\end{enumerate}
\end{Rem}

The algorithm is tested with $P(x)=Q(x)=x^2$ over $[s,e]=[1,3]$ with initial value $y(1)=y_0$. The close-formed solution is 
\[
y(x) = (y_0-1)e^{\frac{1-x^3}3} + 1.
\]
With $M=2^8$,  The max error in magnitude is around $1.8\times 10^{-7}$ for all three initial conditions, suggesting that the algorithm is robust to generate accurate solutions.  

\begin{figure}[H]\label{fig:ode_numerica}
	\centering
	\includegraphics[width=6cm]{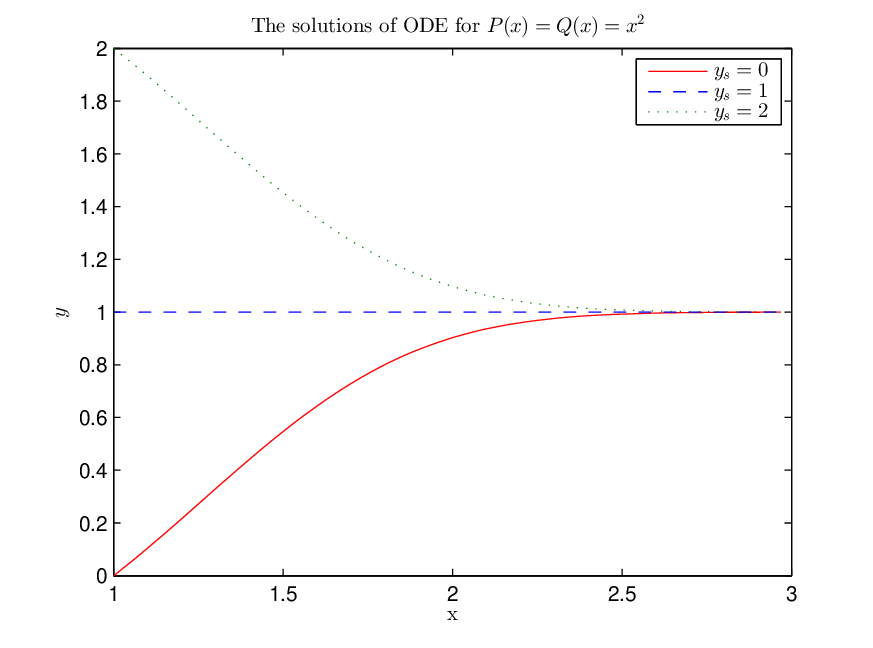}
	\caption{The solutions of ODE (\ref{ODE_1})-(\ref{ODE_2}) by Algorithm \ref{alg_ode} with three initial values at $s$:  $(0, 1, 2)$ with $q=8$.}
\end{figure}

\subsection{Numerical Solution of First Order Differential Equation}\label{application_ode_general}
In this subsection, we aim to develop an algorithm on numerical solution of a general first order ODE:
\begin{eqnarray}
	y'(x) &=& f(x,y),  \quad x\in [s,e] \label{ode_general_eq_v2}\\
	y(s) &=& \xi, \label{ode_general_init_v2}
\end{eqnarray}
where $f(x,y)$ is continuously differential on the range $[s-\delta, e+\delta]\times R^1$. It is well-known that there is a unique solution of Eq. (\ref{ode_general_eq_v2})-(\ref{ode_general_init_v2}) \cite{Boyce_Dirprima}.

Replacing $f(x,y)$ by $f(x+o,y)$ if needed, we assume that $o:=s-\delta=0$ and denote $h(x)$ as the cut-off function in Section \ref{sec:Trigonometric_Interpolation_of_General_Functions} such that
\begin{eqnarray*}
	h(x)=\left\{\begin{array}{cc}
		1 & s\le x \le e, \\
		0 &  \mbox{ $x\le s-\delta$ or  $x\ge e+\delta$}.  \\
	\end{array}\right.
\end{eqnarray*}
Apply $h$ to extend $f(x,y)$ as follows \footnote{We use $u$ to denote the periodic extension of $y$.}: 
\begin{eqnarray}\label{ode_general_F_v2}
	F(x,u)=\left\{\begin{array}{cc}
		f(x,u)h(x) & \mbox{if $x \in [0,b]$},  \\
		-f(-x,u)h(-x) & \mbox{if $x \in [-b,0]$}. 
	\end{array}\right.
\end{eqnarray}
We shall search numerical solution for the extended ODE:
\begin{eqnarray}
	u'(x) &=& F(x,u),  \quad x\in [-b,b] \label{ode_general_f_v2},\\
	u(s) &=& \xi. \label{ode_general_f_init_v2}
\end{eqnarray}
Since $(u(x)-u(-x))' \equiv 0$, $u(x)$ is even and its derivative $z(x):=u'(x)$ is odd. It is clear that $u$ can be smoothly extended to even periodic function with period $2b$ and $u(x)|_{[s,e]}$ solves Eq. (\ref{ode_general_eq_v2})- (\ref{ode_general_init_v2}).  Let  $\{(x_k, z_k)\}_{0\le k <N}$ be a grid set of $z(x)$:
\begin{eqnarray}
	x_k &=& -b + \frac{2b }N k, \qquad k=0, 1, \cdots, N-1, \label{grid_x_ode}\\
	z_0&=&0, \quad z_{N-k} = -z_k,  \qquad 1\le k <M, \label{grid_z_ode}
\end{eqnarray}
and 
\begin{equation}\label{z_M}
	z_M(x) = \sum_{0\le j<M}b_j \sin \frac{j\pi x}{b} 
\end{equation}
be the interpolant of $z(x)$ with 
\begin{equation}\label{b_j_ext}
	b_j=\frac{2}N\sum_{k=0}^{N-1} (-1)^j z_k \sin\frac{2\pi j k}{N}=\frac{4}N\sum_{k=0}^{M-1} (-1)^j z_k \sin\frac{2\pi j k}{N}, \quad 0\le j < M. 
\end{equation}
It is clear 
\[
	\frac{\partial b_j}{\partial z_k} = \frac{4}N (-1)^j  \sin\frac{2\pi j k}{N}, \quad 0\le j,k < M. 
\]
$u$ can be approximated based on Eq. (\ref{z_M})  by
\begin{equation}\label{tildeu}
	\tilde{u}_M(x ) = \sum_{0\le j <M}  a_j\cos  \frac{j\pi x}{b}, \quad a_j= -\frac{bb_j}{j\pi }, \quad 1\le j <M,
\end{equation}
and $a_0$ can be solved by the initial condition $u(-s)=u(x_{m+n})=\xi$ 
\[
a_0 = \xi-\sum_{1\le j <M} (-1)^j a_j\cos  \frac{2\pi  j (m+n) }{N}.
\]

Let $0_M$ be the $M$-dim zero vector and define $\frac{1}0:=0$. The following notations will be adopted in the rest of this subsection. 
\begin{eqnarray*}
	u_k &=& \tilde{u}_M(x_k), \quad F_k=F(x_k, u_k), \quad DF_k =\frac {\partial F}{\partial u} (x_{k}, u_{k}),\\
	Z&=& \{z_k\}_{0\le k<M}, \quad U= \{u_k\}_{0\le k<M}, \quad F = \{F_k\}_{0\le k<M}, \quad DF = \{DF_k\}_{0\le k<M},\\
	J &=& [0, 1, \frac12, \dots, \frac1{M-1}, 0_M], \\
	\Phi&=&[ \frac{1}{j} \cos \frac{2\pi j (m+n) }N ]_{j=0}^{M-1}, \quad \Phi_N = [\Phi, 0_M], \\
	\Psi&=&\{ (z_j-F_j)DF_j \}_{j=0}^{M-1},  \quad \Psi_N = [\Psi, 0_M],  \quad \quad I = sum(\Psi). 
\end{eqnarray*}
ODE (\ref{ode_general_f_v2})-(\ref{ode_general_f_init_v2}) can be solved by minimizing the following error function:  
\begin{equation}\label{target_phi}
	\phi(z_{0}, z_{1}, ...,z_{M-1}) = \frac1{2M}\sum_{0\le k < M} (z_k-F_k)^2.  
\end{equation}

We need an effective way to calculate its gradient $\frac{\partial \phi}{\partial Z}$ when $M$, the number of variables of $\phi$, is not small.
\begin{equation}\label{gradient_v2}
	M\frac {\partial \phi}{\partial z_t} = (z_t-F_t) - \sum_{0\le k<M} (z_k-F_k) DF_k \frac {\partial u_k}{\partial z_t}.
\end{equation}

To copy with $\frac{\partial U}{\partial Z}$ in Eq. (\ref{gradient_v2}), we need express $U$ in term of $Z$.    By Eq (\ref{b_j_ext}) and $z_0=z_M=0$, we obtain for $0\le k <N$
\begin{eqnarray}
	u_k &=&  a_0 - \sum_{0\le j <M} (-1)^j \frac{bb_j}{j\pi }  \cos  \frac{2 \pi jk }{N} \nonumber\\
	&=&  a_0 - \frac{2b}{\pi N} \sum_{0\le j <M} \frac{1}{j}  \cos \frac{2 \pi jk }{N} \sum_{0\le l <N} z_l \sin\frac{2\pi j l}{N} \label{uk_1}\\
	&=& a_0 - \frac{2b }{\pi N}\sum_{0\le l <N} z_l \sum_{0\le j <M}  \frac1j \cos  \frac{2 \pi jk }{N}   \sin\frac{2\pi j l}{N} \nonumber\\
	&=& a_0 - \frac{4b }{\pi N}\sum_{0 \le l <M} z_l \sum_{0\le j <M}  \frac1j \cos  \frac{2 \pi jk }{N}   \sin\frac{2\pi j l}{N}. \label{uk_2} 
\end{eqnarray}
The last step is due to $z_{l}\sin\frac{2\pi j l}{N}=z_{N-1}\sin\frac{2\pi j (N-l)}{N}$. 
One can rewrite (\ref{uk_1}) in term of ifft as follows:
\begin{equation}\label{U}
	U = a_0 - \frac{2bN}{\pi} Re\{ifft(J\circ Im\{ifft(Z)\})\},
\end{equation}
where $\circ$ denotes the Hadamard product, which applies the element-wise product to two metrics of same dimension.    
$a_0$ in (\ref{uk_2}) can be further interpreted by $Z$ as follows:
\begin{eqnarray}
	a_0 &=& \xi-\sum_{1\le j <M} (-1)^j a_j\cos  \frac{2\pi  j (m+n) }{N}  \nonumber\\
	&=& \xi+  \frac{b}{\pi }\sum_{1\le j <M} (-1)^j \frac{b_j}{j }\cos \frac{2\pi  j (m+n) }{N}   \nonumber\\
	&=& \xi+  \frac{2b}{\pi N }\sum_{1\le j <M}  \frac{1}{j } \cos \frac{2\pi  j (m+n) }{N}  \sum_{k=0}^{N-1}  z_k \sin\frac{2\pi j k}{N}, \label{b_j_z_k_2}
\end{eqnarray}
which implies 
\begin{eqnarray}
	\frac{\partial a_0}{\partial z_k}  &=&   \frac{4b}{\pi  N}  \sum_{0\le j <M} \frac{1}{j}  \cos  \frac{2\pi j (m+n)\pi }{N} \sin\frac{2\pi j k}{N}, \quad 0\le k<M. \label{a_0_z_k}
\end{eqnarray}
Combining (\ref{uk_2}) and (\ref{a_0_z_k}), we obtain 
\begin{eqnarray}	
	\frac{\partial u_k}{\partial z_t} &=&  \frac{4b}{\pi  N}  \sum_{0\le j <M} \frac{1}{j}  \cos  \frac{2\pi  j (m+n) }{N} \sin\frac{2\pi j t}{N} \nonumber\\
	&-&\frac{4b }{\pi N} \sum_{0\le j <M}  \frac 1j \cos  \frac{2 \pi jk }{N}   \sin\frac{2\pi j t}{N} \label{z_k_v2} \label{uz_2}.
\end{eqnarray}

We are ready to attack the non-trivial term in Eq. (\ref{gradient_v2}). Define 
\[
w_t :=\sum_{0\le k<M} (z_k-F_k) DF_k \frac {\partial u_k}{\partial z_t}.
\]
By (\ref{uz_2}),
\begin{eqnarray*}
	w_t &=& \frac{4b}{\pi N }  \sum_{0\le j <M} \frac 1j \cos  \frac{2\pi  j (m+n)\pi }{N}   \sin\frac{2\pi j t}{N} \sum_{0\le k<M} (z_k-F_k) DF_k \nonumber \\
	&-& \frac{4b }{\pi N }  \sum_{0\le j <M} \frac 1j \sin\frac{2\pi j t}{N}   \sum_{0\le k<M} (z_k-F_k) DF_k  \cos  \frac{2 \pi jk }{N} \label{3t_2_1}\\
	&=& \frac{4b }{\pi N}  \sum_{0\le j <M} \Phi_j   \sin\frac{2\pi j t}{N} \sum_{0\le k<M} (z_k-F_k) DF_k \nonumber \\
	&-& \frac{4b }{\pi N}  \sum_{0\le j <M} J_j \sin\frac{2\pi j t}{N}   \sum_{0\le k<M} (z_k-F_k) DF_k  \cos  \frac{2 \pi jk }{N}. \label{3t_2}
\end{eqnarray*}
The gradient vector (\ref{gradient_v2}) can be formulated by FFT as follows:
\begin{eqnarray}
	W &=& \frac{4 b I}{\pi} Im(ifft(\Phi_N)) -\frac{4\pi N}b Im \{ifft (J\circ Re[ifft(\Psi_N)])\}, \nonumber\\
	\frac{\partial \phi}{\partial Z}&=& \frac{1}{M}(Z-F-W[0:M-1]). \label{phi_to_z}
\end{eqnarray}
One can implement the algorithm by following steps.
\begin{Alg}\label{alg:ode_general} 
	For a given ODE (\ref{ode_general_eq_v2})-(\ref{ode_general_init_v2}), 
	\begin{enumerate}
		\item Select $(p,q,\delta)$ as in Algorithm \ref{alg_textenion} such that $f(x,y)$ can be smoothly extended to $[s-\delta, e+\delta]\times R$.   
		\item Construct the cut-off function $h(x)$ with parameter $(s,e,\delta, r=0.5)$ by Eq. (\ref{cut-off-formula}). 
		\item Construct $F(x,u)$ by (\ref{ode_general_F_v2}).
		\item \label{alg:ode_general_init}  Construct initial values $\{u_i\}_{0\le i <N}$ at $\{x_i\}_{0\le i <N}$ defined by (\ref{grid_x_ode}) and calculate initial value $z(x_i)=F(x_i,u_i)$ in (\ref{grid_z_ode}) that is required by an optimization function.
		\item Apply an optimization function with the gradient function $\frac{\partial \phi}{\partial Z}$ formulated by Eq. (\ref{phi_to_z}).
		\item Apply the opt values of $Z$ returned by the optimization function in previous step to calculate required $U$ by (\ref{U}) and (\ref{b_j_z_k_2}) and return $U|_{[s,e]}$.
	\end{enumerate}
\end{Alg}
\begin{Rem}
	\begin{enumerate}
		\item Algorithm \ref{alg:ode_general} is expected to be efficient since the gradient of the target function can be carried out by $O(N\ln_2 N)$ operations. 
		\item  A standard difference method can be used to construct initial values $\{u_i\}_M^{N-1}$ in Step \ref{alg:ode_general_init} of Algorithm \ref{alg:ode_general}, which should be smoothed by the cut-off function and then be  extend evenly at the grid points over $[-b,0]$. 
	\end{enumerate}
\end{Rem}

In the rest of this subsection, we study the performance of Algorithm \ref{alg:ode_general}, labeled $intp$, by solving the following ODE
\begin{equation}\label{ex:ode_general}
	y'(x) = f(x,y) + xy +y ^2,  \qquad  y(1) = 0,
\end{equation}
where
\[
f(x,y) = \cos \theta x - \theta x \sin \theta x - xy - y^2, \theta \in \{ \frac{\pi}{2}, \frac{3\pi}{2} \}.
\]
The analytic solution is available as follows:
\[
y(x) = x\cos \theta x.
\]
We shall compare $intp$ to the classic Runge–Kutta method, labeled $rk4$, outlined in Eq (\ref{rk4-1})-(\ref{rk4-3}) \cite{ptvf}.
\begin{eqnarray}
	y_{n+1} &=& y_n + \frac{\lambda}{6}(k_1+2k_2+2k_3+k_4), \label{rk4-1}\\
	k_1 &=& f(x_n, y_n) ,\quad k_2 = f(x_n + \frac{\lambda}{2}, y_n +  \frac{\lambda}{2}k_1), \label{rk4-2}\\
	\quad k_2 &=& f(x_n + \frac{\lambda}{2}, y_n +  \frac{\lambda}{2}k_2), \quad k_2 = f(x_n + \lambda, y_n + \lambda k_3). \label{rk4-3}
\end{eqnarray}
It is well-known that local truncation error of $rk4$ is on the order of $O(\lambda^{5})$ and hence the total accumulated error is supposed to be $O(\lambda^{4})$.

In addition, we implement a benchmark method, labeled as $benc$, by adjusting $rk4$ in a ``cheating" way that $y_{n+1}$ is estimated by the true value $Y_{n}$ at $x_n$.  Mathematically, all $y_n$ in Eq (\ref{rk4-1})-(\ref{rk4-3}) is replaced by $Y_n$.  In this way, $benc$ prevents accumulating error and is supposed to be on the order of $O(\lambda^{5})$. 

The overall performance is reported in Table \ref{tab:ode_general_performance}, where the max magnitude of errors at grid nodes $\{x_j\}_{j=0}^{N-1}$ are shown under three methods.  In addition, to see the performance of $intp$ at non-grid nodes,  the max error is also reported under $intp_{g}$ where the max is taken over the error set obtained by applying identified $\tilde{u}_M$ (see Eq. (\ref{tildeu})) to the grid points with step size $\lambda/4$.  Table \ref{tab:ode_general_performance} also includes the value of target function Eq. (\ref{target_phi}) returned by Matlab function $fmincon$ under Column $opt_{err}$.  
\begin{table}[htbp]
	\caption{The max magnitudes of four sets of errors and the optimization error described above.  Algorithm \ref{alg:ode_general} is implemented with $p=6, q=7,\delta=1$.}
	\begin{tabular}{crrrrr}
		$\theta$ & \multicolumn{1}{c}{$intp$} & \multicolumn{1}{c}{$rk4$} & \multicolumn{1}{c}{$benc$} & \multicolumn{1}{c}{$intp_{g}$} & \multicolumn{1}{c}{$opt_{err}$}  \\ \hline\hline
		${\pi}/2$  & 3.2E-09 & 7.7E-07 & 3.0E-08 & 3.2E-09 & 3.2E-17  \\\hline
		${3\pi}/2$ & 4.8E-07 & 2.1E-03 & 1.1E-05 & 4.8E-07 & 1.0E-17  \\
	\end{tabular}%
	\label{tab:ode_general_performance}%
\end{table}%
We also look into changes of consecutive errors for three covered methods similar as we did in Section \ref{subsec:error_pattern}. 
Figure \ref{fig:max_error_analysis} plots such changes defined by 
\[
\{\hat {f}_M(x_i) - f(x_i) - (\hat {f}_M(x_{i-1}) -f(x_{i-1}))\}.
\] 
for $intp$ and $benc$. Figure \ref{fig:diff} compares same changes among three covered methods. We have the following comments.
\begin{Rem}\label{rem:test_2_performance}
	\begin{enumerate}
		\item Optimization process successfully converges to the desired interpolant $z_M(x)$ of the target function $z(x)=u'(x)$ for both scenarios as shown by $opt_{err}$ values in Table \ref{tab:ode_general_performance}.
		\item $intp$ is almost same as $intp_g$, suggesting that $\tilde {u}_M$ uniformly converges the target function $u$ with same accuracy as exhibited at grid points at ${x_j}_{0\le j<N}$. 
		\item The performances at scenario $\theta=\pi/2$ are significantly better than that at scenario $\theta=3\pi/2$ as expected since the target function $y$ by (\ref{ex:ode_general}) with small $\theta$ is much less volatile than with large $\theta$. 
		\item $intp$ outperforms $benc$ significantly not just by the measure on max error in Table \ref{tab:ode_general_performance}, but also its error is consistently smaller than $benc$'s as shown in Fig \ref{fig:max_error_analysis}. 
		\item\label{rem:test_2_performance_part5} $rk4$ has worst performance based on max error, especially for $\theta=3\pi/2$.  Fig. \ref{fig:diff} plots the difference of two consecutive errors for each of three methods.  As one can see,  errors from $rk4$ moves in one direction from certain point $x_j$ and leads to significant aggregated error at the end.  On the other hand,  the error of $intp$ moves in sawtooth around $0$, an error-correct sign, which makes $intp$ outperform $bech$, a method without error propagation. See Section \ref{subsec:error_pattern} for more discussions of error behaviors of $\hat f_M(x)$.
	\end{enumerate}
\end{Rem} 
\begin{figure}[H]\label{fig:max_error_analysis}
	\centering
	\includegraphics[width=5cm]{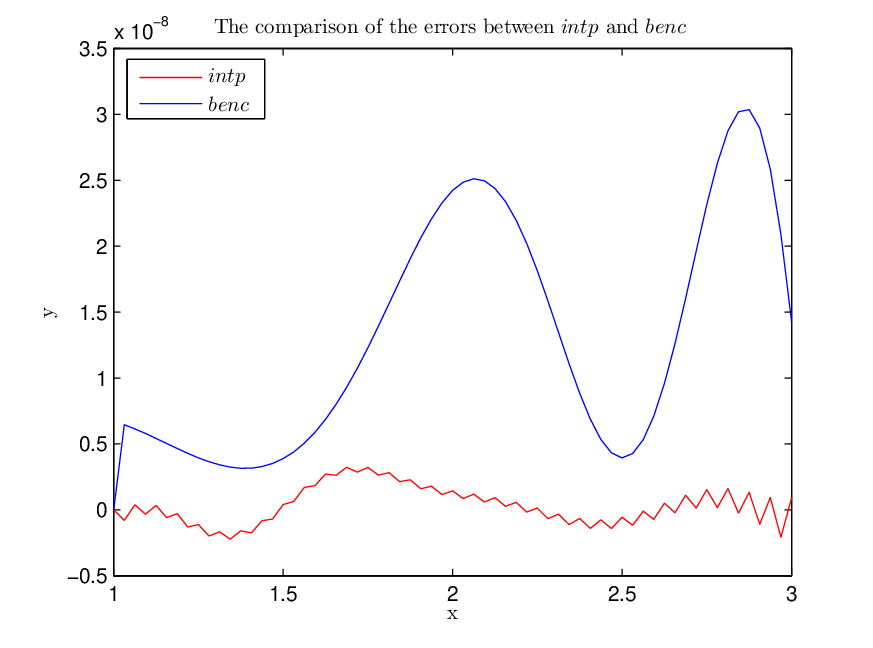}
	\includegraphics[width=5cm]{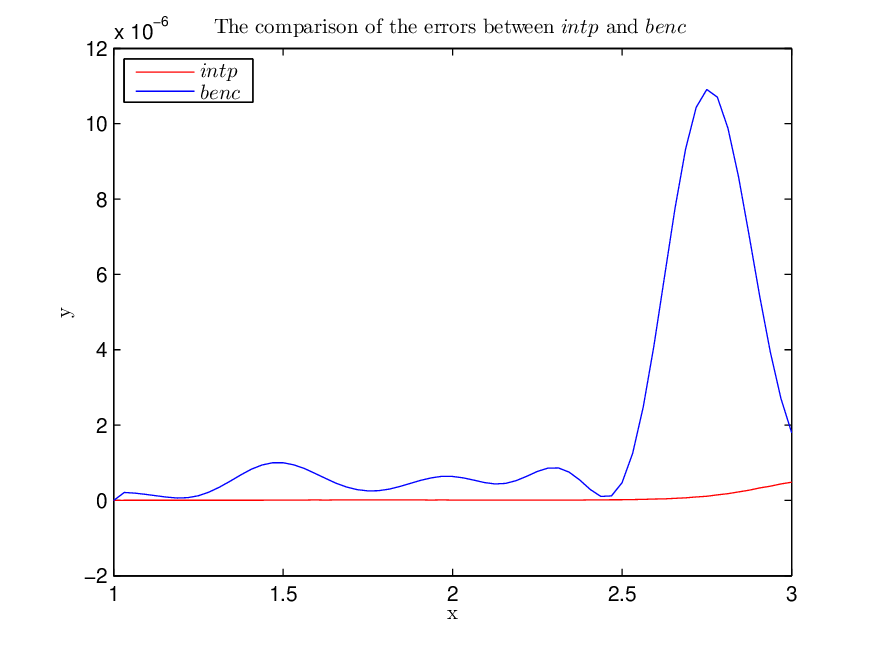}
	\caption{The comparison of consecutive errors between $intp$ and $benc$ for $\theta=\pi/2$ on the left and $\theta=3\pi/2$ on the right.} 
\end{figure}

\begin{figure}[H]\label{fig:diff}
	\centering
	\includegraphics[width=5cm]{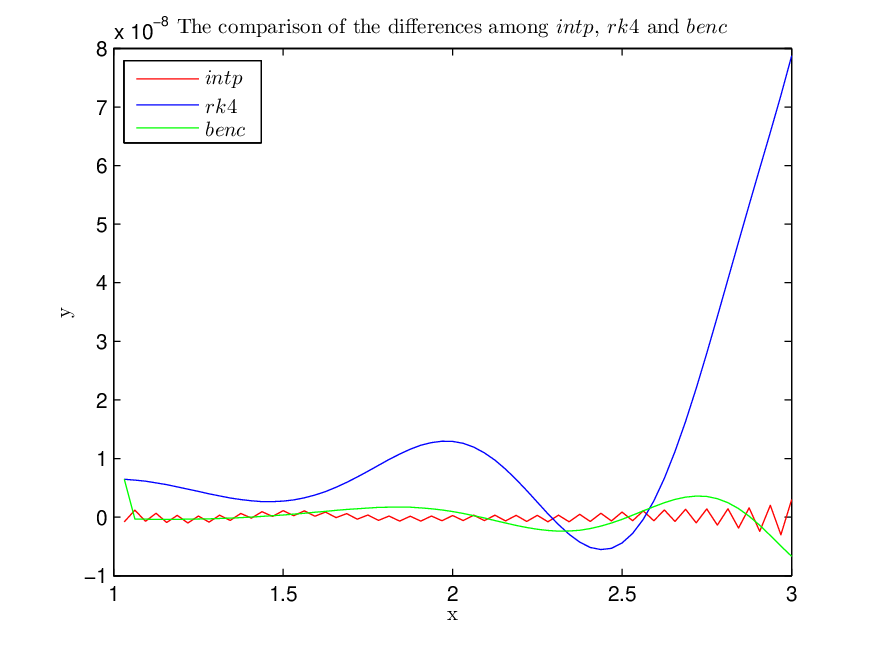}
	\includegraphics[width=5cm]{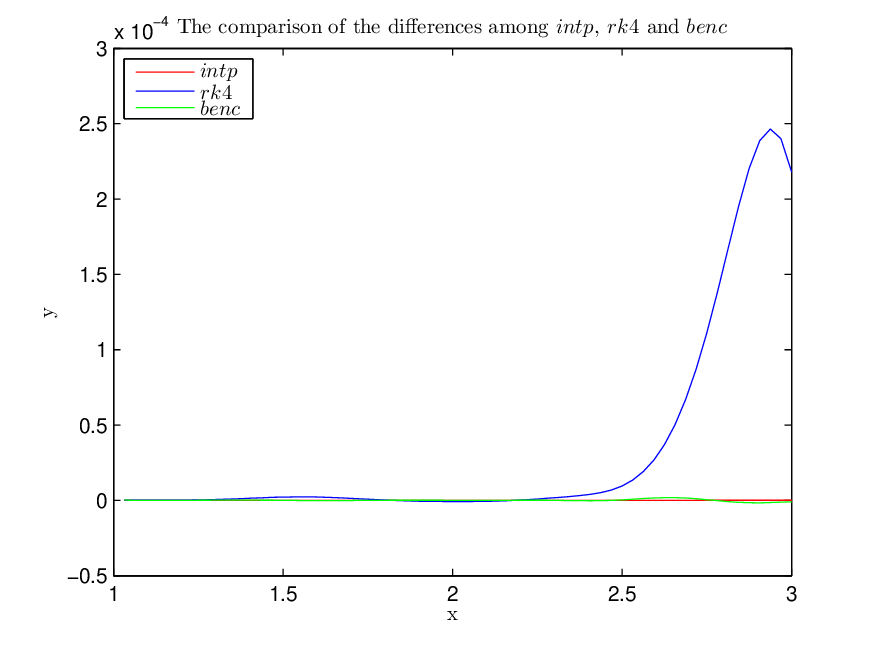}
	\caption{The comparison of changes of consecutive errors among $intp$, $benc$, $rk4$ for $\theta=\pi/2$ on the left panel and $\theta=3\pi/2$ on the right panel.}
\end{figure}
Fig. \ref{fig:uandy} plots the target $y$ and identified $\tilde u_M$ over $[0,b]$.  $\tilde u_M$ recovers $y$ over the range $[s,e]=[1,3]$. It becomes flat near boundaries and can be treated as even periodic function.      
\begin{figure}[H]\label{fig:uandy}
	\includegraphics[width=5cm]{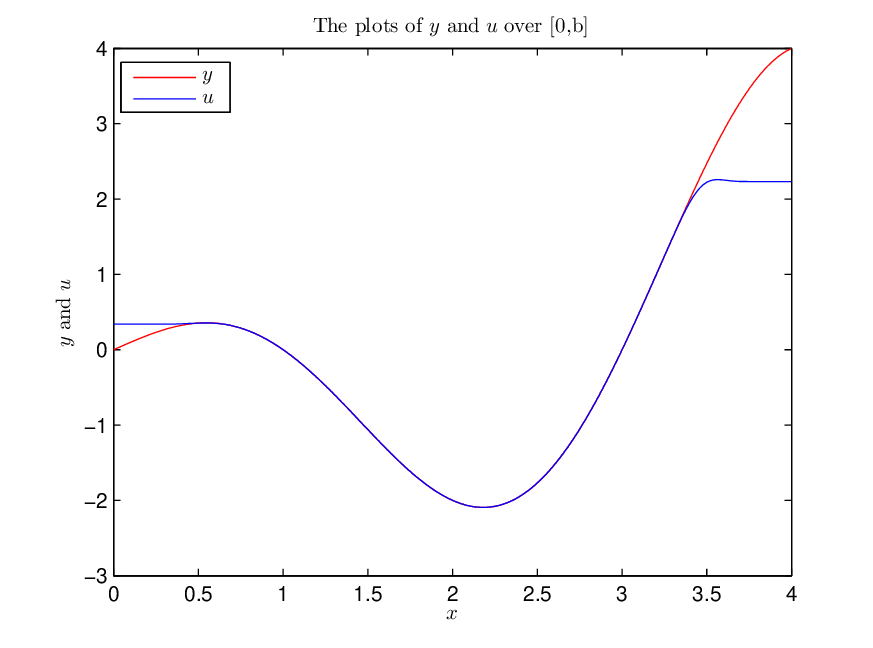}
	\includegraphics[width=5cm]{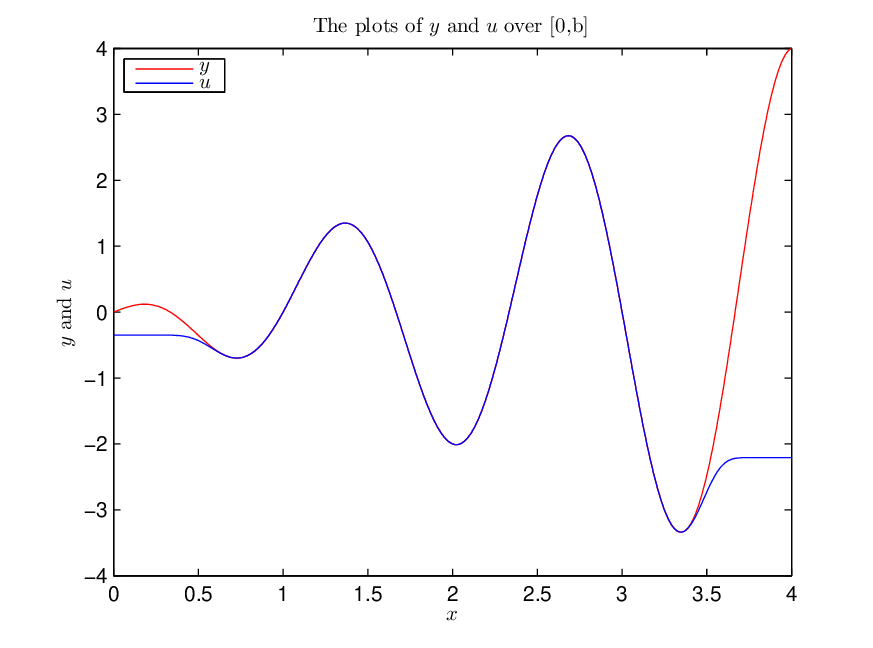}
	\caption{ Comparison between the target solution $y$ and trigonometric estimation $\tilde u_M$ over the half period $[0,b]=[0,4]$. Note that $\tilde u_M$ is supposed to approximate $y$ over $[s,e]=[2,3]$.}
\end{figure}

The optimization method of solving first order non-linear ODE can be extended to a high order non-linear ODE with flexible initial and boundary conditions. In \cite{zou_ODE}, we shall apply the trigonometric interpolation algorithm to solve second order non-linear ODE with various setting on initial and boundary conditions.  We use the same objective function, formulate the gradient vector in a similar way and pay extra attention to handle boundary and condition conditions.  Certain numerical testing shows consistent good performances as shown in this subsection. 	

\section{Conclusions}\label{sec:Conclusions}
In this paper, we have proposed a new trigonometric interpolation method and established convergent properties. The method improves an trigonometric interpolation algorithm in the literature such that it can better leverage Fast Fourier Transform (FFT) to enhance efficiency. The interpolant can be formulated in such way that the cancellation effects can be more profoundly leveraged for error analysis, which enables us not only to  improve the convergent rate of interpolant,  but establish similar uniform convergence for the derivatives of interpolants. We have further enhanced the method that can be applied to non-periodic functions defined on bounded interval. Numerical testing results confirm accurate performance of the algorithm.  As application, we demonstrate how it can be applied to estimate integrals and solve linear/non-linear ODE. The test results show that it outperforms Trapezoid/Simpson method to copy with integrals and standard Runge-Kutta algorithm to handle ODE. In addition,  we show numerical evidences that estimation error of the algorithm likely exhibits ``local property", i.e. error at a point tends not to propagate and result in significant compounding error at some other place, a remarkable advantage compared to polynomial-based approximations, suggesting the performance of the algorithm is likely better than what has be theoretically approved in this paper.

Considering the accurate performance and analytic attractiveness of the new trigonometric interpolation algorithm, especially it can be applied to non-periodic functions, we expect that it can be used in a wide spectrum.  We shall further study its application in subsequent papers \cite{zou_ODE}-\cite{zou_ODE_ide_volterra}. 
\section*{Acknowledgments}
This paper is dedicated to my wife Jian Jiao for her unwavering support to the family in last thirty plus years.

\appendix

\section{The proof of Lemma \ref{keylemma} }\label{pf_keylemma}
If $f(x)$ is even,  by definition,
\[
\hat{y}_{l} = \sum_{j=0}^{M-1}A'_j \cos\frac{j\pi x_l}b=\sum_{j=0}^{M-1}A'_j (-1)^j \cos\frac{2\pi j l}N,  \quad 0\le l < N.
\]
By Eq. (\ref{identity_cos_sum}), for terms with even index $l=2k$, we obtain  
\begin{eqnarray}
	\hat{y}_{2k} &=& \sum_{j=0}^{M-1} (-1)^j A'_j cos\frac{2\pi j (2k)}{N} \nonumber\\
	&=& \sum_{j=0}^{M-1} (-1)^j cos\frac{2\pi jk}{M} \frac{2}N\sum_{l=0}^{N-1} (-1)^j y_{l}cos\frac{2\pi jl}{N}\nonumber\\
	&=& \frac{2}N\sum_{j=0}^{M-1} cos\frac{2\pi jk}{M}\sum_{h=0}^{M-1} (y_{2h}cos\frac{2\pi j(2h)}{N} + y_{2h+1}cos\frac{2\pi j(2h+1)}{N}) \nonumber \\
	&=&I_e + II_e, \nonumber
\end{eqnarray}
where
\begin{eqnarray*}
	I_e &=& \frac{2}N\sum_{j=0}^{M-1} cos\frac{2\pi jk}{M}\sum_{h=0}^{M-1} y_{2h}cos\frac{2\pi jh}{M} \nonumber \\
	II_e &=& \frac{2}N\sum_{j=0}^{M-1} cos\frac{2\pi jk}{M}\sum_{h=0}^{M-1} y_{2h+1}cos\frac{2\pi j(2h+1)}{N} 
\end{eqnarray*}
It is not hard to verify 
\[
I_e =y_{2k}, \qquad II_e = \frac{1}M \sum_{h=0}^{M-1} y_{2h+1},
\]
which implies (\ref{eq_keylemma_a_even}). One can similarly derive Eq. (\ref{eq_keylemma_a_odd}).
\begin{eqnarray*}
	\hat{y}_{2k+1} &=& \sum_{j=0}^{M-1} (-1)^j A'_j cos\frac{2\pi j(2k+1)}{N} \nonumber\\
	&=& \sum_{j=0}^{M-1} (-1)^j cos\frac{2\pi j(2k+1)}{N} \frac{2}N\sum_{l=0}^{N-1} (-1)^j y_{l}cos\frac{2\pi jl}{N}\nonumber\\
	&=& \frac{2}N\sum_{j=0}^{M-1} cos\frac{2\pi j(2k+1)}{N}\sum_{h=0}^{M-1} (y_{2h}cos\frac{2\pi j(2h)}{N} + y_{2h+1}cos\frac{2\pi j(2h+1)}{N}) \nonumber \\
	&=&III_e + IV_e, \nonumber
\end{eqnarray*}
where
\begin{eqnarray*}
	III_e &=& \frac{2}N\sum_{j=0}^{M-1} cos\frac{2\pi j(2k+1)}{N}\sum_{h=0}^{M-1} y_{2h}cos\frac{2\pi j (2h)}{N} \nonumber \\
	IV_e &=& \frac{2}N\sum_{j=0}^{M-1} cos\frac{2\pi j(2k+1)}{N}\sum_{h=0}^{M-1} y_{2h+1}cos\frac{2\pi j(2h+1)}{N} 
\end{eqnarray*}
Similarly, we obtain
\[
III_e =\frac{1}M \sum_{h=0}^{M-1} y_{2h}\label{I_odd},  \qquad IV_e =y_{2k+1},
\]
which implies (\ref{eq_keylemma_a_odd}). 

Similar derivation can be used to prove \ref{eq_keylemma_b} if $f(x)$ is odd.





\end{document}